\newcommand{\bracket}[1]{\ensuremath{\left[ #1 \right]}}
\newcommand{\braces}[1]{\ensuremath{\left\{ #1 \right\}}}
\newcommand{\parenth}[1]{\ensuremath{\left( #1 \right)}}
\newcommand{\refeqn}[1]{(\ref{eqn:#1})}
\newcommand{\reffig}[1]{Fig. \ref{fig:#1}}
\newcommand{\tr}[1]{\mbox{tr}\ensuremath{\negthickspace\bracket{#1}}}
\newcommand{\trs}[1]{\mbox{tr}\ensuremath{\!\bracket{#1}}}
\newcommand{\SO}{\ensuremath{\mathsf{SO(3)}}}
\newcommand{\so}{\ensuremath{\mathfrak{so}(3)}}
\renewcommand{\Re}{\ensuremath{\mathbb{R}}}
\newcommand{\Sph}{\ensuremath{\mathsf{S}}}
\title{\LARGE \bf
Global Unscented Attitude Estimation via\\ the Matrix Fisher Distributions on $\SO$}
\author{Taeyoung Lee%
\thanks{Taeyoung Lee, Mechanical and Aerospace Engineering, George Washington University, Washington DC 20052 {\tt tylee@gwu.edu}}
\thanks{This research has been supported in part by NSF under the grants CMMI-1243000, CMMI-1335008, and CNS-1337722.}
}
\newtheorem{definition}{Definition}
\newtheorem{prop}{Proposition}
\begin{document}
\allowdisplaybreaks

\maketitle \thispagestyle{empty} \pagestyle{empty}

\begin{abstract}
This paper is focused on probabilistic estimation for the attitude dynamics of a rigid body on the special orthogonal group. We select the matrix Fisher distribution to represent the uncertainties of attitude estimates and measurements in a global fashion without need for local coordinates. Several properties of the matrix Fisher distribution on the special orthogonal group are presented, and an unscented transform is proposed to approximate a matrix Fisher distribution by selected sigma points. Based on these, an intrinsic, global framework for Bayesian attitude estimation is developed. It is shown that the proposed approach can successfully deal with large initial estimator errors and large uncertainties over complex maneuvers to obtain accurate estimates of the attitude. 
\end{abstract}

\section{Introduction}


Attitude estimation has been widely studied with various filtering approaches and assumptions~\cite{CraMarJGCD07}. One of the biggest challenges is that the attitude dynamics evolve on a compact, nonlinear manifold, namely the special orthogonal group. The attitude is often parameterized by certain three dimensional coordinates, and an estimator is developed in terms of these local coordinates. However, it is well known that minimal, three-parameter attitude representations, such as Euler-angles or modified Rodriguez parameters, suffer from singularities. They are not suitable for large angle rotational maneuvers, as the type of parameters should be switched persistently in the vicinity of singularities. 


Quaternions are another popular choice in attitude estimation~\cite{CraMarJGCD97,PsiJGCD00}. They do not exhibit singularities, but as the configuration space of quaternions, namely the three-sphere double covers the special orthogonal group, there exists ambiguity. More explicitly, a single attitude may be represented by two antipodal points on the three-sphere. The ambiguity should be carefully resolved in any quaternion-based attitude observer and controller, otherwise they may exhibit unwinding, for example~\cite{BhaBerSCL00}. Furthermore, quaternions are often considered as vectors in $\Re^4$, instead of incorporating the structures of the three-sphere carefully when designing attitude estimators. 

Instead, attitude observers have been designed directly on the special orthogonal group to avoid both singularities of local coordinates and the ambiguity of quaternions. The development for \textit{deterministic} attitude observers on the special orthogonal group includes complementary filters~\cite{MahHamITAC08}, a robust filter~\cite{SanLeeSCL08}, and a global attitude observer~\cite{WuKauPICDC15}.

The prior efforts to construct \textit{probabilistic} attitude estimators on the special orthogonal group and the relevant research have been relatively unpopular compared with deterministic approaches, especially in the engineering community. Probability and stochastic processes on manifolds have been studied in~\cite{Eme89,Elw82}. Directional statistics have been applied in earth sciences and material sciences~\cite{MarJup99,Chi03}.

Earlier works on attitude estimation on the special orthogonal group include~\cite{LoEshSJAM79}, where a probability density function is expressed using noncommutative harmonic analysis~\cite{ChiKya01}. This idea of using Fourier analysis on manifolds has been applied for uncertainty propagation and attitude estimation~\cite{ParKimP2IICRA05,MarPMDSAS05,LeeLeoPICDC08}. The use of noncommutative harmonic analysis allows a probability density function to be expressed globally, and the Fokker-Plank equation to be transformed into ordinary differential equations, thereby providing a fundamental solution for the Bayesian attitude estimation. However, in practice they may  cause computational burden, since a higher order of Fourier transform is required as the estimated distribution becomes more concentrated. 

Recent literature is rich with filtering techniques and measurement models developed in terms of exponential coordinates~\cite{ParLiuR08,Chi12,ChiKobPICDC14,LonWolRSV13}. This is perhaps the most natural approach to develop an estimator formally on an abstract Lie group, while taking advantages of the fact that the lie algebra is a linear space. The limitation is that the exponential map is a local diffeomorphism around the identity element, and as such, the issue of a singularity remains.

This paper aims to construct a probabilistic attitude estimator on the special orthogonal group, while avoiding complexities of harmonic analysis and singularities of exponential coordinates. We use a specific form of the probability density, namely the matrix Fisher distribution~\cite{MarJup99}, to represent uncertainties in the estimates of attitudes. Therefore, the proposed approach can be considered as an example of \textit{assumed density filtering}. To project the propagated density onto the space of the matrix Fisher distributions, an unscented transform and its inverse are proposed. Assuming that the attitude measurement errors are represented by a matrix Fisher distribution, it is shown that the posteriori estimation also follows the Fisher distribution.

These provide a Bayesian, probabilistic attitude estimator on the special orthogonal group in a global fashion. It is demonstrated that the proposed estimator exhibits excellent convergence properties even with large initial estimation errors and large uncertainties, in contrast to the attitude estimators based on local coordinates and linearization that tend to diverge for such challenging cases. 


\section{Matrix Fisher Distribution on $\SO$}\label{sec:MF}

Directional statistics deals with statistics for unit-vectors and rotations in $\Re^n$, where various  probability distributions on nonlinear compact manifolds are defined, and statistical analysis, such as inference and regressions are studied in those manifolds~\cite{MarJup99,Chi03}. In particular, the matrix Fisher (or von Mises-Fisher matrix) distribution is a simple exponential model introduced in~\cite{DowB72,KhaMarJRSSS77}. Interestingly, many of the prior work on the matrix Fisher distributions in directional statistics are developed for the Stiefel manifold, $\mathsf{V}_k(\Re^n)=\{X\in\Re^{n\times k}\,|\, XX^T=I_{n\times n}\}$.

The configuration manifold for the attitude dynamics of a rigid body is the three-dimensional special orthogonal group, 
\begin{align*}
\SO = \{R\in\Re^{3\times 3}\,|\, R^TR=I_{3\times 3},\,\mathrm{det}[R]=1\}.
\end{align*}
This section provides the definition of the matrix Fisher distribution and several properties developed for $\SO$. 

Throughout this paper, the \textit{hat} map: $\wedge:\Re^3\rightarrow \so$ is defined such that $\hat x = -(\hat x)^T$, and $\hat x y =x\times y$ for any $x,y\in\Re^3$. The inverse of the hat map is denoted by the \textit{vee} map: $\vee:\so\rightarrow\Re^3$. The set of circular shifts of $(1,2,3)$ is defined as $\mathcal{I}=\{(1,2,3),(2,3,1),(3,1,2)\}$. 

\subsection{Matrix Fisher Distribution on $\SO$}

The probability density of the matrix Fisher distribution on $\SO$ is given by
\begin{align}
p(R)=\frac{1}{c(F)}\exp(\trs{F^T R}),\label{eqn:MF}
\end{align}
where $F\in\Re^{3\times 3}$ is a matrix parameter, and $c(F)\in\Re$ is a normalizing constant defined as
\begin{align}
c(F) = \int_{\SO} \exp(\trs{F^T R}) dR. \label{eqn:cF}
\end{align}
For $\SO$, there is a bi-invariant measure, referred to as \textit{Haar} measure, that is unique up to scalar multiples~\cite{ChiKya01}. The above expression is assumed to be defined with respect to the particular Haar measure $dR$ that is normalized such that $\int_{\SO} dR=1$. In other words, the uniform distribution on $\SO$ is given by $1$ with respect to $dR$. This is often stated that \refeqn{MF} is defined with respect to the uniform distribution. 
When $R$ is distributed according to the matrix Fisher distribution with the parameter matrix $F$, it is denoted by $R\sim\mathcal{M}(F)$. 

The singular value decomposition of $F$ is given by
\begin{align}
F= U S V^T,
\end{align}
where $U,V\in\Re^{3\times 3}$ are orthonormal matrices, and $S=\mathrm{diag}[s_1,s_2,s_3]$ for the singular values $s_i>0$ and $i\in\{1,2,3\}$. Throughout this paper, we assume that $\mathrm{det}[F]>0$, such that $\mathrm{det}[U]\mathrm{det}[V]=1>0$. Then, $U,V\in\SO$ holds without loss of generality (in the case $\mathrm{det}[U]=\mathrm{det}[V]=-1$, we can multiply $U,V$ by $-1$).

Let $K\in\Re^{3\times 3}$ and $M\in\SO$ be the elliptic component and the polar component of $F$, i.e.,
\begin{align}
F=KM,\quad K=K^T=USU^T,\quad M=UV^T.\label{eqn:KM}
\end{align}
Since $\trs{F^T R} = \trs{VSU^T R} = \trs{S U^T RV}$, the probability density $p(R)$ is maximized when $R=M$ for a fixed $F$. Therefore, the polar component $M$ is considered as the \textit{mean} attitude. The matrices $S$ and $U$ of the elliptic component determine the degree and the direction of dispersion about the mean attitude. More specifically, the probability density becomes more concentrated as the singular value $s_i$ increases. The role of $S,U$ in determining the shape of the distribution will be discussed more explicitly at Section \ref{sec:UAE}.


While there are various approaches to evaluate the normalizing constant for the matrix Fisher distribution on the Stiefel manifold, only a few papers deal with the normalizing constant on $\SO$. A method based on the holonomic gradient descent is introduced in~\cite{SeiShiJMA13}, which involves the numerical solution of multiple ordinary differential equations. The normalizing constant is expressed as a simple one-dimensional integration in~\cite{WooAJS93}, but the given result is erroneous as the change of volume over a certain transformation is not considered properly. We follow the approach of~\cite{WooAJS93}, to find a closed form expression of the normalizing constant.

\begin{prop}
The normalizing constant for the matrix Fisher distribution \refeqn{MF} is given by
\begin{align}
c(F) = c(S) & = \int_{-1}^1 \frac{1}{2}I_0\!\bracket{\frac{1}{2}(s_i-s_j)(1-u)} \nonumber\\
&\times I_0\!\bracket{\frac{1}{2}(s_i+s_j)(1+u)}\exp (s_ku)\,du,\label{eqn:cS}
\end{align}
where $(i,j,k)\in\mathcal{I}$, and $I_0$ denotes the zero degree, modified Bessel function for the first kind~\cite{AbrSte65}, i.e., $I_0(u) = \sum_{r=0}^\infty (\frac{1}{2}u)^{2r}/(r!)^2$.
\end{prop}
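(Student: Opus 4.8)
The plan is to exploit the invariance of the Haar measure to reduce the integral over $\SO$ to something computable. Since $\trs{F^T R} = \trs{S U^T R V}$, the substitution $Q = U^T R V$ gives $c(F) = \int_{\SO} \exp(\trs{SQ})\,dQ$, because the Haar measure is bi-invariant and $U,V \in \SO$. This shows $c(F)$ depends only on the singular values, justifying the notation $c(S)$, and reduces the problem to evaluating $\int_{\SO} \exp(s_1 Q_{11} + s_2 Q_{22} + s_3 Q_{33})\,dQ$, an integral that only sees the three diagonal entries of $Q$.

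The main step is to choose an explicit parametrization of $\SO$ that isolates these diagonal entries and carries a tractable expression for $dR$. Following the cited approach of~\cite{WooAJS93}, I would parametrize a rotation by first fixing one axis and then composing with a rotation about it; concretely, write $Q$ in terms of a scalar $u$ that captures one diagonal contribution (say the $s_k u$ term, with $u$ ranging over $[-1,1]$) together with an angular variable describing the remaining planar rotation. The $\exp(s_k u)$ factor in \refeqn{cS} strongly suggests that $u$ is essentially the $(k,k)$ diagonal entry, or a cosine of a polar angle, while the two Bessel functions arise from integrating $\exp[\tfrac12(s_i \mp s_j)(\cdots)\cos\phi]$ over the angular variable $\phi \in [0,2\pi)$, using the standard identity $\frac{1}{2\pi}\int_0^{2\pi} e^{a\cos\phi}\,d\phi = I_0(a)$. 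The product of two $I_0$'s with arguments $\tfrac12(s_i-s_j)(1-u)$ and $\tfrac12(s_i+s_j)(1+u)$ indicates that the remaining diagonal contribution $s_i Q_{ii} + s_j Q_{jj}$ splits, after the substitution, into a sum and a difference combination, each integrating to its own Bessel factor.

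Concretely, I would carry out the steps in this order. First, perform the substitution $Q = U^T R V$ to obtain $c(F) = \int_{\SO}\exp(\trs{SQ})\,dQ$ and establish $c(F)=c(S)$. Second, introduce the parametrization — I expect a form where one writes $Q$ as a product of an azimuthal rotation, a rotation by a polar angle whose cosine is related to $u$, and a final rotation by $\phi$, so that the three diagonal entries become explicit trigonometric functions of $(u,\phi)$ and possibly a third angle. Third, compute the Haar measure in these coordinates, taking care — this is precisely where~\cite{WooAJS93} erred — to track the Jacobian/volume factor correctly; the normalization $\int_{\SO}dR=1$ must be respected. Fourth, integrate out the two angular variables using the $I_0$ identity, leaving the single integral over $u\in[-1,1]$. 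Finally, identify the collected factors with \refeqn{cS} and verify the symmetry in $(i,j,k)$ is consistent with the arbitrary choice $(i,j,k)\in\mathcal{I}$.

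The hard part will be the third step: getting the change-of-variables factor for the Haar measure exactly right. The paper explicitly flags that the prior result is wrong precisely because ``the change of volume over a certain transformation is not considered properly,'' so the entire correctness of the closed form hinges on computing the Jacobian of the chosen parametrization and the induced normalization constant without error. A secondary subtlety is ensuring that the decomposition of $s_i Q_{ii}+s_j Q_{jj}$ into the two combinations $\tfrac12(s_i-s_j)(1-u)$ and $\tfrac12(s_i+s_j)(1+u)$ emerges naturally from the trigonometric identities rather than being forced, and that the limits $u\in[-1,1]$ correctly capture the full range of $\SO$ rather than a double or half cover.
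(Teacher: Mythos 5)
Your proposal is correct, and it takes a genuinely different route from the paper. The paper never parametrizes $\SO$ directly: after the same reduction $c(F)=c(S)$ via $Q=U^TRV$, it passes to unit quaternions, writes $\trs{SQ(x)}=x^TBx$ for a $4\times 4$ matrix $B$ determined by $S$, carefully computes the volume scaling factor $\mathcal{J}(x)=2$ between $\mathsf{RP}^3$ and $\SO$ (precisely the step where the cited reference erred), and then identifies $c(S)$ with the normalizing constant of the Bingham distribution on $\Sph^3$, quoting a known one-dimensional integral formula for that constant. Your route---a direct Euler-angle parametrization---computes everything from scratch, and it does go through: with $Q=R_3(\phi)R_1(\theta)R_3(\psi)$ (a $3$-$1$-$3$ factorization adapted to the index $k$) and $u=\cos\theta=Q_{kk}$, the product-to-sum identities give exactly
\begin{align*}
\trs{SQ} = s_k u + \tfrac{1}{2}(s_i-s_j)(1-u)\cos(\phi-\psi)+\tfrac{1}{2}(s_i+s_j)(1+u)\cos(\phi+\psi),
\end{align*}
and the normalized Haar measure in these coordinates is the classical $\frac{1}{8\pi^2}\sin\theta\,d\phi\,d\theta\,d\psi$, so integrating out the sum and difference angles with the identity $\frac{1}{2\pi}\int_0^{2\pi}e^{a\cos\alpha}d\alpha=I_0(a)$ and substituting $\sin\theta\,d\theta=-du$ yields \refeqn{cS}, including the overall factor $\frac{1}{2}$ (from $4\pi^2/8\pi^2$). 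Notably, the step you flag as the hard one---the change-of-volume factor---is actually the easy part on your route: the Euler-angle Haar density is standard, and the parametrization covers $\SO$ exactly once (up to a measure-zero set), so the double-cover subtlety that broke the earlier published derivation lives entirely on the quaternion side and your approach never meets it. What the paper's detour buys is the explicit equivalence between the matrix Fisher distribution and the Bingham distribution on $\Sph^3$, which is conceptually useful elsewhere (e.g., for the rejection sampling of attitude measurement errors used in the simulations); what yours buys is a self-contained, elementary derivation that relies on no external normalizing-constant formula.
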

\begin{proof}
See Appendix \ref{sec:PfC}.
\end{proof}
This implies that the normalizing constant only depends on $S$, and the order of the singular values in $S$ can be shifted. It is not burdensome to evaluate \refeqn{cS} numerically, as it takes less than 0.01 second with the 2.4 GHz Intel Core i5 processor in Matlab. Also, from \refeqn{cS}, it is straightforward to find a closed form of the derivatives of the normalizing constant with respect to $s_i$, which is useful for maximum log-likelihood estimation of the matrix parameter~\cite{DowB72,KhaMarJRSSS77}. 

\subsection{Visualization of the matrix Fisher distribution}

A method to visualize any probability density function on $\SO$ has been proposed in~\cite{LeeLeoPICDC08}. Let $r_i\in\Sph^2$ be the $i$-th column of a rotation matrix $R$, i.e., $R=[r_1,r_2,r_3]\in\SO$, where the two-sphere is the space of unit-vectors in $\Re^3$, i.e., $\Sph^2=\{q\in\Re^3\,|\, \|q\|=1\}$.  The key idea for visualization is that $r_i$ has a certain geometric meaning of the attitude, namely the direction of the $i$-th body-fixed frame in the inertial frame. Once the marginal distribution for $r_i$ is obtained from a probability density function of $\SO$, it can be visualized on the surface of the unit-sphere via color shading. If the distribution of each $r_i$ is mildly concentrated, the distributions of all of three body-fixed axes can be visualized at the single unit-sphere, thereby illustrating the shape of attitude probability dispersion intuitively.

Here we show that the marginal distribution for the matrix Fisher distribution can be obtained in a closed form.

\begin{prop}
Suppose $R\sim\mathcal{F}(M)$. Let $(i,j,k)\in\mathcal{I}$, and let $r_i\in\Sph^2$ be the $i$-th column of $R$. Then, the marginal probability density of $r_i$ is 
\begin{align}
p(r_i) & = \frac{c_2(f_{jk},r_i)}{c(S)}   \exp (f_i^Tr_i),\label{eqn:pri}
\end{align}
with respect to the uniform distribution on $\Sph^2$, where $f_i\in\Re^3$ denotes the $i$-th column of the matrix parameter $F$, and $f_{jk}=[f_j,f_k]\in\Re^{3\times 2}$. The constant $c_2(f_{jk},r_i)$ is defined as
\begin{align}
c_2(f_{jk},r_i) = I_0 \bracket{\sum_{i=1}^2 \sqrt{\lambda_l\bracket{f_{jk}^T (I_{3\times 3}-r_ir_i^T) f_{jk}}}},\label{eqn:c2}
\end{align}
where $\lambda_l[\cdot]$ denotes the $l$-th eigenvalue of a matrix. 
\end{prop}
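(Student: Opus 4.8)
The plan is to obtain $p(r_i)$ by marginalizing the joint density \refeqn{MF} over the two remaining columns of $R$. Writing $\trs{F^TR}=f_i^Tr_i+f_j^Tr_j+f_k^Tr_k$, the factor $\exp(f_i^Tr_i)$ depends only on $r_i$ and pulls out of the integral. For a fixed $r_i\in\Sph^2$, the admissible completions $(r_j,r_k)$ are exactly the orthonormal pairs spanning the plane $r_i^\perp$ that render $R\in\SO$ a proper rotation under the cyclic orientation fixed by $(i,j,k)\in\mathcal{I}$; this set is a single circle, i.e.\ a copy of $\mathsf{SO(2)}$. Using $c(F)=c(S)$ from Proposition 1, the marginal therefore takes the form
\begin{align*}
p(r_i)=\frac{\exp(f_i^Tr_i)}{c(S)}\int_{\mathsf{SO(2)}}\exp\parenth{f_j^Tr_j+f_k^Tr_k}\,d\mu,
\end{align*}
so it remains to show that the fiber integral equals $c_2(f_{jk},r_i)$.

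First I would make the disintegration of the measure precise. The map $R\mapsto r_i=Re_i$ realizes the quotient $\SO\to\SO/\mathsf{SO(2)}\cong\Sph^2$, and by uniqueness of the invariant (Haar) measure up to scale, the normalized $dR$ factors as the normalized uniform measure on $\Sph^2$ times the normalized Haar measure $d\mu$ on the $\mathsf{SO(2)}$ fiber. This is what guarantees that $p(r_i)$ is a genuine density with respect to the uniform distribution on $\Sph^2$ and fixes the constant multiplying the fiber integral.

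Next I would parameterize the fiber. Choosing any orthonormal $N=[n_1,n_2]$ spanning $r_i^\perp$ with $n_1\times n_2=r_i$, every completion is $[r_j,r_k]=NQ(\theta)$ with $Q(\theta)\in\mathsf{SO(2)}$ the planar rotation by $\theta$. Then $f_j^Tr_j+f_k^Tr_k=\trs{B^TQ(\theta)}$ with $B=N^Tf_{jk}\in\Re^{2\times2}$, which collapses to the form $\alpha\cos\theta+\beta\sin\theta=\rho\cos(\theta-\phi)$ for a single amplitude $\rho\ge0$. The standard representation $\frac{1}{2\pi}\int_0^{2\pi}\exp(\rho\cos\psi)\,d\psi=I_0(\rho)$ then delivers the Bessel function, so the fiber integral is $I_0(\rho)$.

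The last step, which I expect to be the main obstacle, is to identify $\rho$ with $\sqrt{\lambda_1}+\sqrt{\lambda_2}$ for $\lambda_l=\lambda_l\bracket{f_{jk}^T(I_{3\times3}-r_ir_i^T)f_{jk}}$. Since $NN^T=I_{3\times3}-r_ir_i^T$, one has $B^TB=f_{jk}^T(I_{3\times3}-r_ir_i^T)f_{jk}$, so $\lambda_l=\sigma_l^2$ for the singular values $\sigma_l$ of $B$, giving $\sqrt{\lambda_1}+\sqrt{\lambda_2}=\sigma_1+\sigma_2$. The delicate point is that integrating $\trs{B^TQ(\theta)}$ over $\mathsf{SO(2)}$ (not $\mathsf{O(2)}$) yields $\rho=\sigma_1+\sigma_2$ only when the relative orientation encoded by $\det B$ is nonnegative: diagonalizing via the SVD $B=U_B\Sigma V_B^T$ works cleanly only after absorbing $U_B^TQ(\theta)V_B$ into a single planar rotation, which lands in $\mathsf{SO(2)}$ precisely when $\det(U_BV_B^T)=1$. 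I would therefore compute $\det B=r_i^T(f_j\times f_k)$ explicitly and track its sign through the $\mathsf{SO(2)}$ integral, treating this orientation bookkeeping as the crux that must be resolved to match $I_0(\rho)$ with $c_2(f_{jk},r_i)$.
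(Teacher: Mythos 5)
Your outline is the same route the paper takes. The paper writes every rotation whose $i$-th column is $r_1$ as $[r_1,\, r_{1c}Z]$ with $Z\in\mathsf{SO(2)}$, invokes volume preservation of the map $R\mapsto(r_1,Z)$ by citation (where you invoke uniqueness of the Haar measure and disintegration over the quotient $\SO\rightarrow\Sph^2$), pulls out $\exp(f_1^Tr_1)$, and evaluates the fiber integral as the $\mathsf{SO(2)}$ matrix Fisher normalizing constant $c_2$, computed beforehand in \refeqn{c2S} as $I_0(\trs{S})$ --- which is exactly your collapse of $\trs{B^TQ(\theta)}$ to $\rho\cos(\theta-\phi)$ followed by the Bessel identity. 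Up to that point your argument and the paper's coincide step for step, with yours somewhat more explicit about the measure-theoretic bookkeeping.

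The ``orientation bookkeeping'' you flag as the unresolved crux is a genuine issue, and you should know the paper does not resolve it either: its $\mathsf{SO(2)}$ lemma is proved under the assumption $F=USV^T$ with $U,V\in\mathsf{SO(2)}$ and $s_1,s_2>0$, which forces $\det F>0$, and the marginalization step then asserts that $c_2(f_{23}^Tr_{1c})$ ``depends only on the sum of two singular values'' without checking that $\det(f_{23}^Tr_{1c})\ge 0$. Your computation settles what actually happens: $\rho^2=\|B\|_F^2+2\det B=\sigma_1^2+\sigma_2^2+2\det B$, so $\rho=\sigma_1+\sigma_2$ when $\det B\ge 0$ but $\rho=\abs{\sigma_1-\sigma_2}$ when $\det B<0$, and $\det B=r_i^T(f_j\times f_k)$ does change sign on $\Sph^2$. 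Concretely, for $F=I_{3\times 3}$ and $r_1=-e_1$ one may take $r_{1c}=[e_2,\,-e_3]$, so that $[r_1,r_{1c}]\in\SO$ and $f_{23}^Tr_{1c}=\mathrm{diag}[1,-1]$; the fiber integral is then $I_0(0)=1$, whereas \refeqn{c2} gives $I_0(2)$. Hence \refeqn{pri}--\refeqn{c2} as stated hold only on the hemisphere $\{r_i\in\Sph^2 : r_i^T(f_j\times f_k)\ge 0\}$ (which contains the high-density region when $\det F>0$), and in general $\sqrt{\lambda_1}+\sqrt{\lambda_2}$ must be replaced by the signed combination $\sqrt{\lambda_1}+\mathrm{sign}\parenth{r_i^T(f_j\times f_k)}\sqrt{\lambda_2}$. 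Completing your proof with the sign tracked, exactly as you propose, yields this corrected statement rather than \refeqn{c2} verbatim; the discrepancy is exponentially small in the concentrated regime, which is presumably why it is invisible in the paper's figures.
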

\begin{proof}
See Appendix \ref{sec:MD}.
\end{proof}

Visualizations for selected matrix Fisher distributions constructed via \refeqn{pri} are available in \reffig{vis}.

\begin{figure}
\centerline{
	\subfigure[$F_a=5I_{3\times 3}$]{
		\includegraphics[width=0.3\columnwidth]{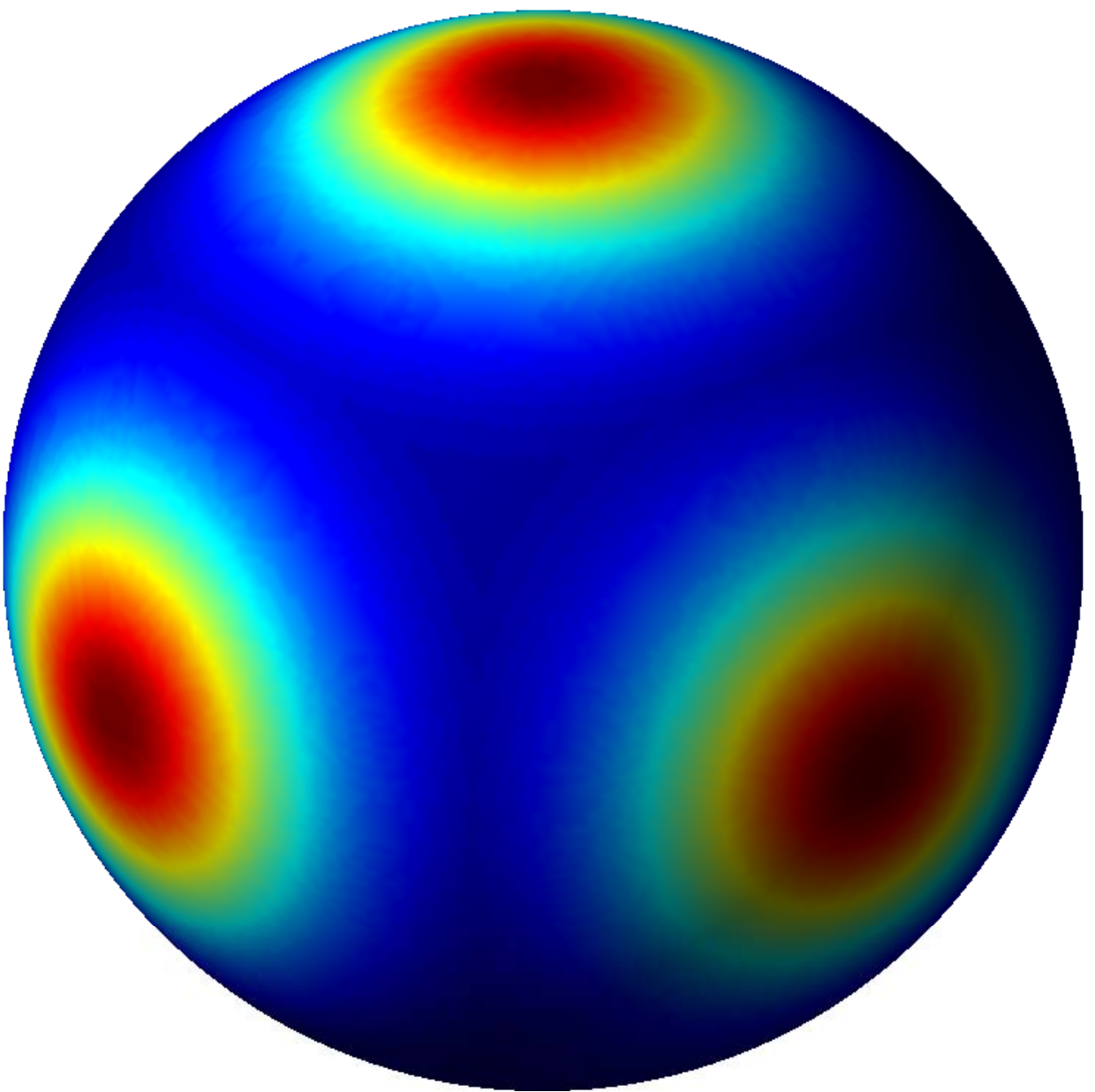}\label{fig:vis1}}
	\hfill
	\subfigure[$F_b=20I_{3\times 3}$]{
		\includegraphics[width=0.3\columnwidth]{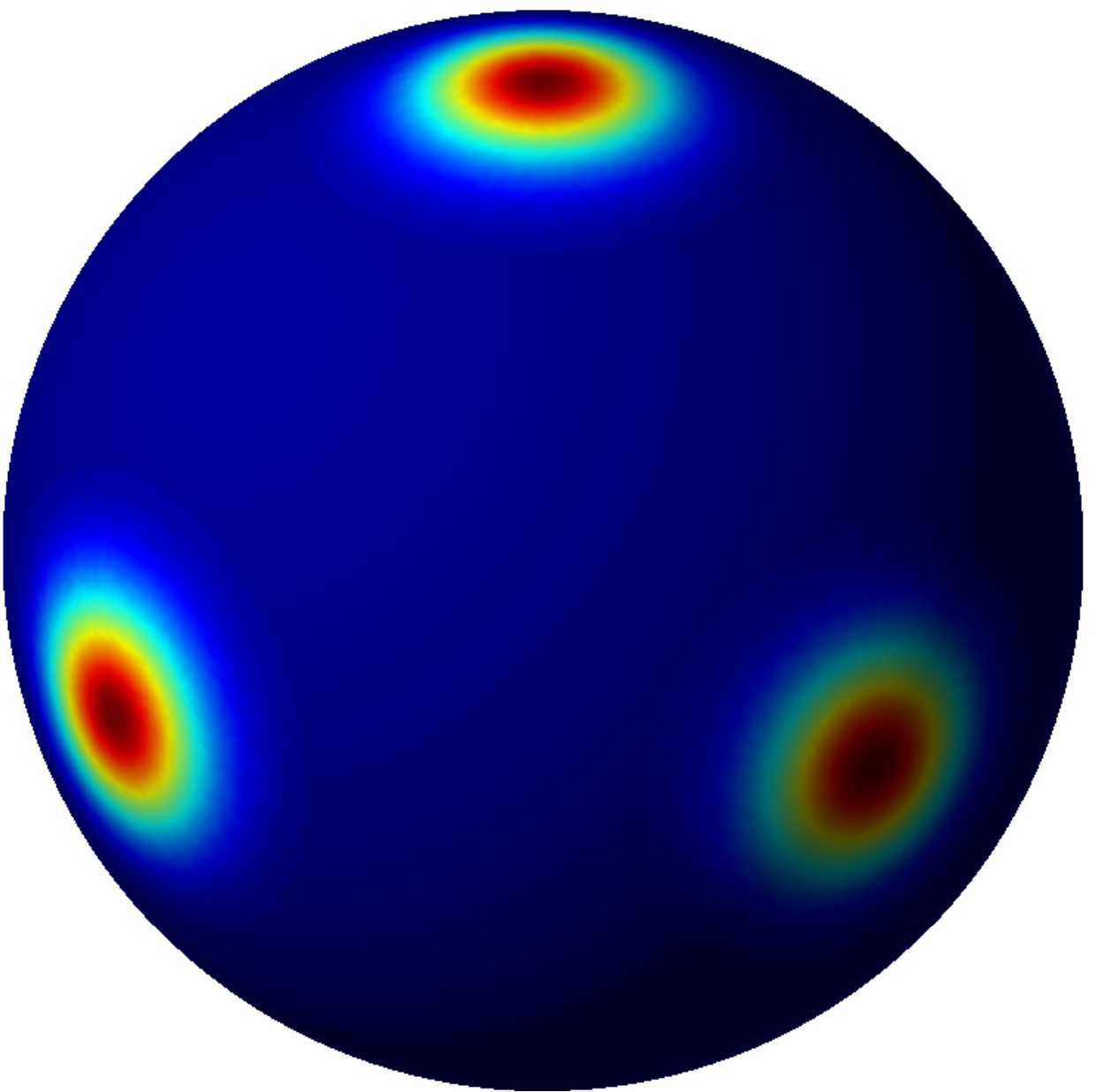}\label{fig:vis2}}
	\hfill
	\subfigure[{$F_c=\mathrm{diag}[25,5,1]$}]{\hspace*{0.005\textwidth}
		\includegraphics[width=0.3\columnwidth]{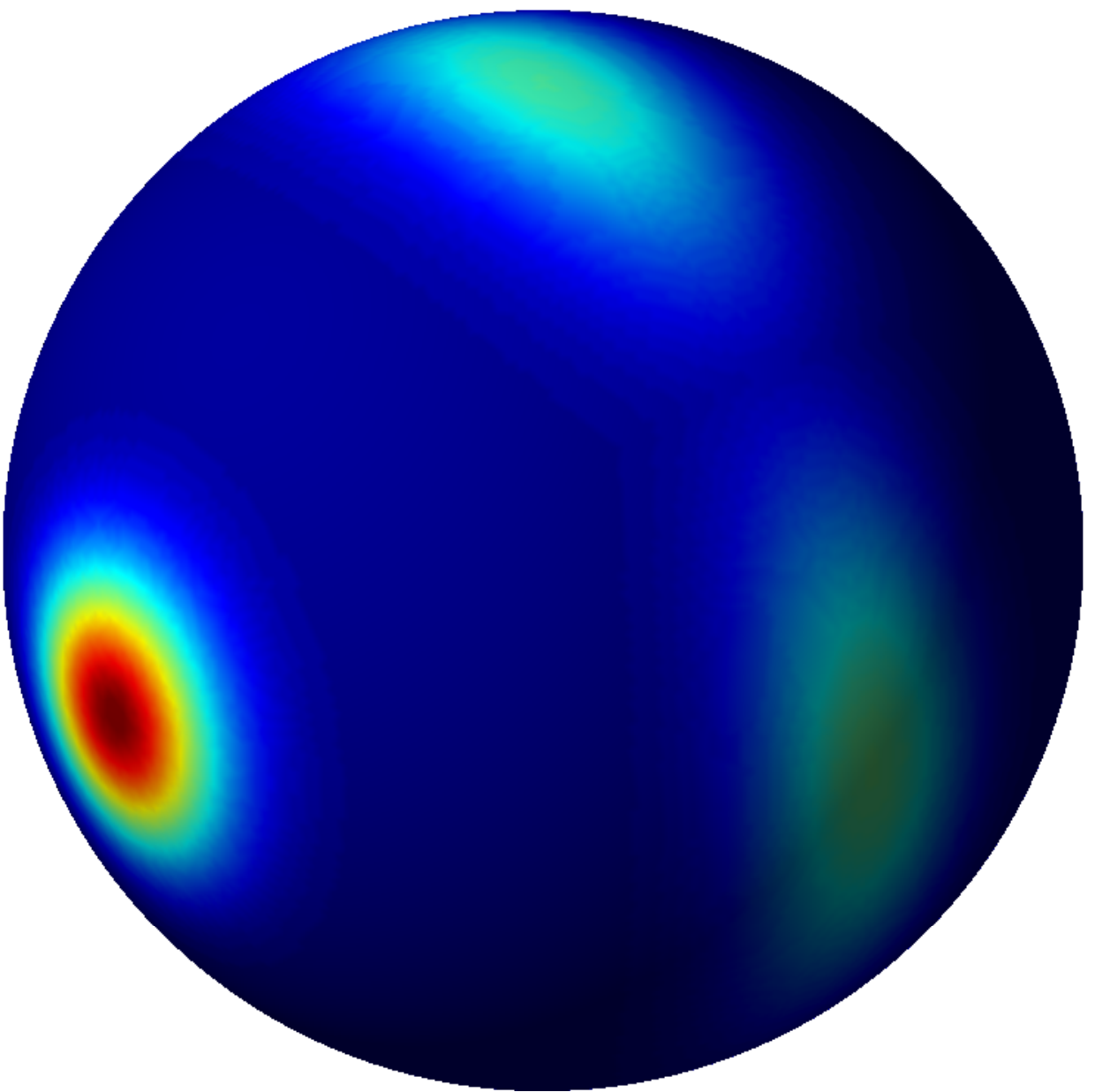}\label{fig:vis3}\hspace*{0.005\textwidth}}
}
\caption{Visualization of selected matrix Fisher distributions: the distribution in (b) is more concentrated than in (a), as the singular values of $F_b$ are greater than those of $F_a$; for both (a) and (b), the distributions of each axis are identical and circular as three singular values of each of $F_a$ and $F_b$ are identical; in (c), the first body-fixed axis (lower left) is more concentrated as the first singular value of $F_c$ is the greatest, and the distributions for the other two axes are elongated. Compared with the third body-fixed axis (top), the probability density of the second body-fixed axis (lower right) is greater, as the second singular value of $F_c$ is greater than the third.}\label{fig:vis}
\end{figure}

\section{Unscented Attitude Estimation}\label{sec:UAE}

In this section, an attitude estimation scheme is proposed based on the matrix Fisher distribution on $\SO$. Assuming that the initial attitude estimate and the attitude measurement errors are described by certain matrix Fisher distributions on $\SO$, we construct an estimated attitude distribution via another matrix Fisher distribution following a Bayesian framework. Therefore, this approach is an example of so-called, \textit{assumed density filters}. 

One issue of any assumed density filter is that the propagated uncertainty is not guaranteed to be distributed as the selected density model. This has been commonly addressed by two distinct approaches. The first one is approximating the dynamics such that the propagated uncertainty follows the selected density model. For example, in extended Kalman filters, the dynamics is linearized to ensure that the propagated uncertainty is Gaussian. The second option is instead approximating the density model by selected parameters along the solution of the exact dynamic model, such as in unscented filters. In short, selecting one of these corresponds to the following question of `what should be approximated between dynamics and probability distributions?'

In attitude estimation problems, the  equations of motion are well known, but it is often challenging to obtain accurate probability distributions. In such cases, it may be reasonable to approximate probability distributions rather than corrupting the exact dynamic model by approximations. Here, we propose an unscented transform to approximate the matrix Fisher distribution on $\SO$ by selected sigma points, and based on this, we construct a Bayesian attitude estimator.

\subsection{Unscented transform for matrix Fisher distribution}

Suppose $R\sim\mathcal{M}(F)$. We wish to define several rotation matrices that approximate $\mathcal{M}(F)$. This is achieved by identifying the role of the elliptic component and the polar component of the matrix parameter $F$ introduced in \refeqn{KM}. Consider a set of rotation matrices parameterized $\theta_i\in[0,2\pi)$ for $i\in\{1,2,3\}$ as
\begin{align}
R_i(\theta_i) = \exp(\theta_i\widehat {Ue_i}) UV^T=U\exp(\theta_i\hat e_i) V^T,\label{eqn:Ri}
\end{align}
where $e_i\in\Re^3$ denotes the $i$-th column of $I_{3\times 3}$. This corresponds to the rotation of the mean attitude $M=UV^T$ about the axis $Ue_i$ by the angle $\theta_i$, where $Ue_i$ is considered expressed with respect to the inertial frame. 

Using \refeqn{KM}, the probability density \refeqn{MF} along \refeqn{Ri} is given by
\begin{align}
p(R_i(\theta_i)) & =\frac{1}{c(F)}\exp(\trs{VSU^T  U\exp(\theta_i\hat {e}_i) V^T})\nonumber\\
&=\frac{1}{c(S)}\exp(\trs{S\exp(\theta_i\hat{e}_i)}).\label{eqn:pRi0}
\end{align} 
Substituting Rodrigues' formula~\cite{ShuJAS93}, namely $\exp(\theta_i\hat{e}_i)=I_{3\times 3}+\sin\theta_i\hat e_i +(1-\cos\theta_i)\hat e_i^2$, and rearranging,
\begin{align}
p(R_i(\theta_i)) 
&=\frac{1}{c(S)}\exp(s_i + (s_j+s_k)\cos\theta_i),\label{eqn:pRi}
\end{align} 
where $j,k$ are determined such that $(i,j,k)\in\mathcal{I}$. This resembles the von Mises distribution on a circle, where the probability density is proportional to $\exp^{\kappa\theta}$ for a concentration parameter $\kappa\in\Re$~\cite{MarJup99}.

The most noticeable property of \refeqn{pRi0} and \refeqn{pRi} is that the probability density depends only on the singular values $s_i$ and the rotation angle $\theta_i$, and it is independent of $U$ or $V$. When considered as a function of $\theta_i$, the overall value of $p(R_i(\theta_i)) $ would increase as $s_i$ becomes larger, and the curve becomes narrower as $s_j+s_k$ increases. For example, a larger $s_1$ implies that the marginal probability density of the first body-fixed axis increases, and the distributions of the marginal probability densities of the second axis and the third axis become narrower along the rotations about the third axis and the second axis, respectively, as illustrated in \reffig{vis3}. Recall \refeqn{Ri} is obtained by rotating the mean attitude $M=UV^T$ about the $i$-th column of $U$. As such, each column of $U$ is considered as the \textit{principle axis} of rotation for $\mathcal{M}(F)$. 

In short, the role of $F=USV^T$ in determining the shape of the distribution of $\mathcal{M}(F)$ is as follows: (i) the rotation matrix $U$ sets the principle axis of rotations; (ii) the singular vales $S$ describe the concentration of the distribution along the principle axes; (iii) the rotation matrix $V$ determines the mean attitude $M=UV^T$, together with $U$. 

In unscented transformations for a Gaussian distribution in $\Re^n$, the sigma points are chosen along the principle axis. Motivated by this and the above observations, the following unscented transform is proposed for the matrix Fisher distribution on $\SO$.

\begin{figure}
\centerline{
	\subfigure[$F=5I_{3\times 3}$]{
		\includegraphics[width=0.45\columnwidth]{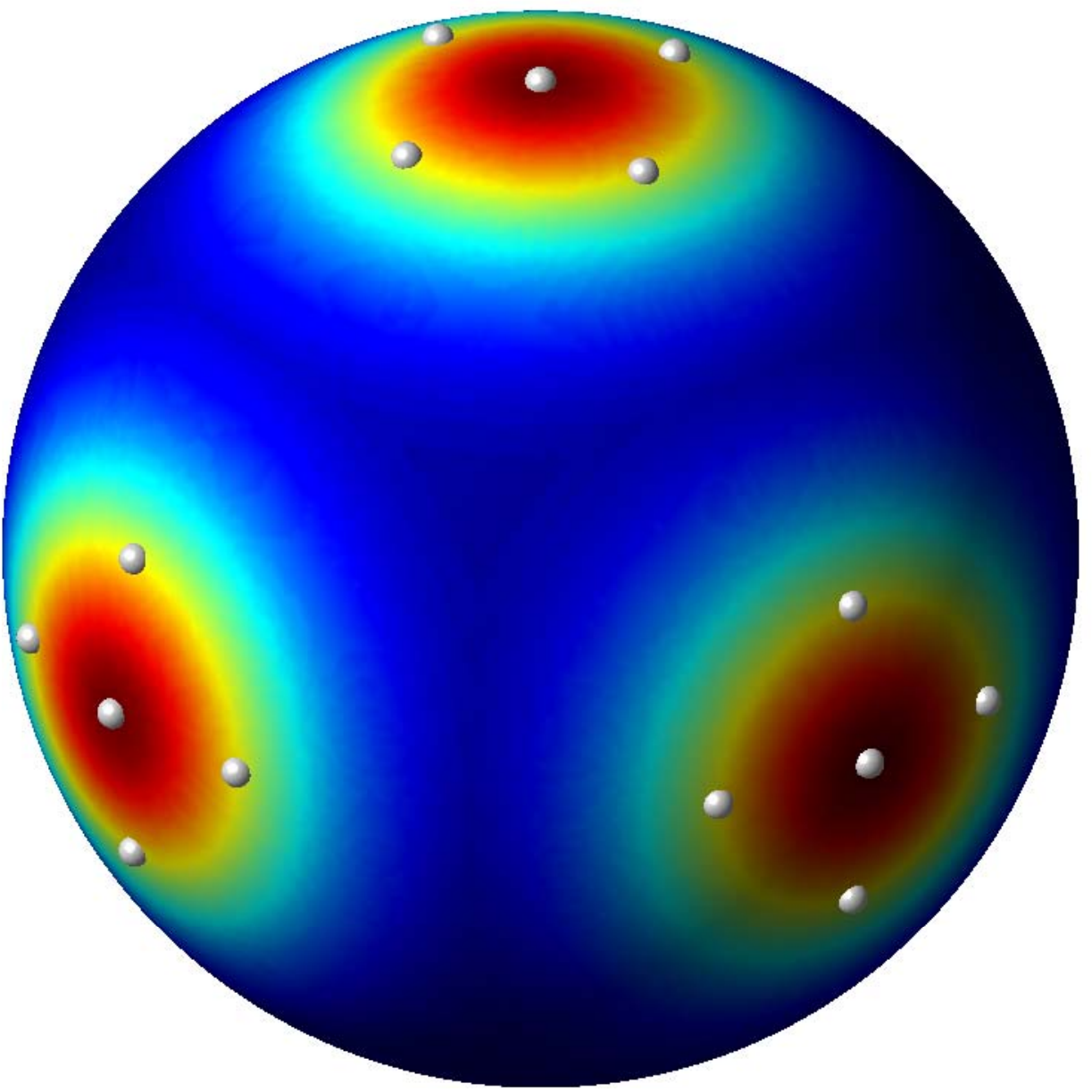}\label{fig:sig1}}
	\hfill
	\subfigure[{$F=\mathrm{diag}[25,5,1]$}]{
		\includegraphics[width=0.45\columnwidth]{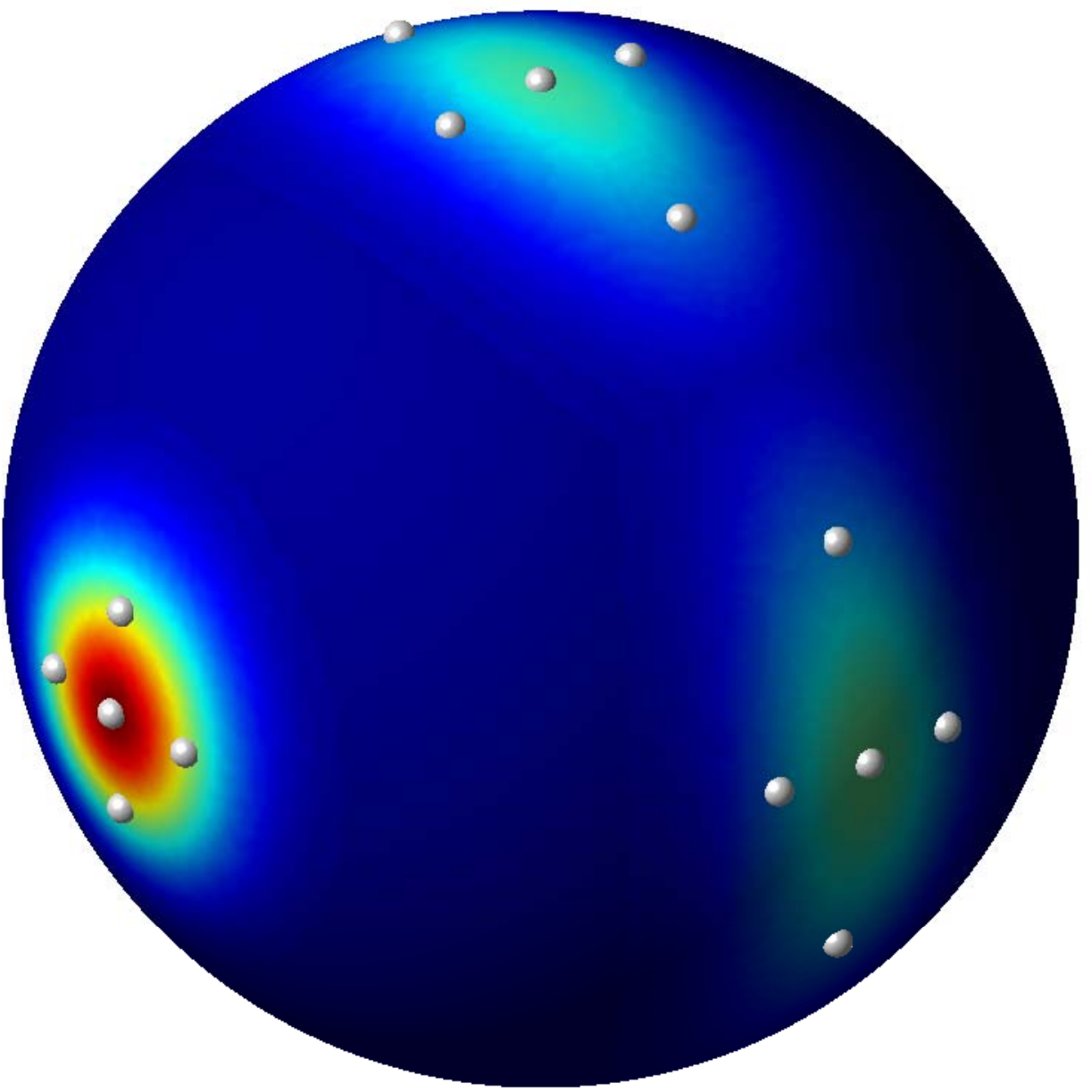}\label{fig:sig3}}
}
\caption{Visualization of sigma points: the body-fixed axes of the  sigma points selected by \refeqn{SP}, \refeqn{costhetai} with $\sigma=0.9$ are illustrated by white dots.}\label{fig:sig}
\end{figure}

\begin{definition}
Consider a matrix Fisher distribution $\mathcal{M}(F)$, and let the singular value decomposition of $F$ is given by \refeqn{KM}. The set of  seven sigma points is defined as
\begin{align}
\{M\}\cup\{\,R_i(\theta_i),R_i(-\theta_i)\,|\, i\in\{1,2,3\}\},\label{eqn:SP}
\end{align}
where each angle $\theta_i$ is chosen as
\begin{align}
\cos\theta_i = \frac{(1-\sigma)\log c(S) +\sigma s_T- s_i}{s_j+s_k},\label{eqn:costhetai}
\end{align}
for $(i,j,k)\in\mathcal{I}$. The parameter $\sigma< 1$ determines the spread of the sigma points, and $s_T=\sum_{i=1}^3 s_i$.
\end{definition}

In other words, for a given parameter matrix $F$, the seven sigma points are chosen as the mean attitude, and positive/negative rotations about each principle axis by the angle determined by \refeqn{costhetai}. Note that each sigma point corresponds to a rotation matrix in $\SO$. The equation \refeqn{costhetai} to select the rotation angle is motivated as follows. Substituting \refeqn{costhetai} into \refeqn{pRi}, and taking logarithm,
\begin{align*}
\log p(R_i(\pm \theta_i))=  \sigma(s_T-\log c(S)),
\end{align*}
for any $i\in\{1,2,3\}$. As such, the last six sigma points of \refeqn{SP} have the same value of the probability density, given by $\frac{1}{c(S)}\exp (\sigma s_T)$. The ratio of that probability density to the maximum density, $\frac{1}{c(S)}\exp(s_T)$ is given by $\exp((\sigma-1)s_T)$. 

Therefore, the last six sigma points will be closer to the mean attitude, when the distribution is concentrated with larger $s_i$, or $\sigma$ becomes larger. As $\sigma\rightarrow 1$, all of the sigma points converge to the mean attitude. 
The sigma points for selected distributions are illustrated at \reffig{sig}. 

Next, we show that the set of sigma points is statistically sufficient.

\begin{prop}\label{prop:IUT}
Suppose the seven sigma points defined in \refeqn{SP} and the parameter $\sigma$ are given for $\mathcal{M}(F)$. Let $\bar R\in\Re^{3\times 3}$ be the arithmetic mean of the sigma points, i.e.,
\begin{align}
\bar R = \frac{1}{7}\bracket{M + \sum_{i=1}^3\braces{R_i(\theta_i)+ R_i(-\theta_i)}}.\label{eqn:barR}
\end{align}
Then, the singular value decomposition of $\bar R$ is given by
\begin{align}
\bar R = U D V^T,\label{eqn:UDV}
\end{align}
where $U,V\in\SO$ corresponds to those of \refeqn{KM}, and $D=\mathrm{diag}[d_1,d_2,d_3]\in\Re^{3\times 3}$. For $(i,j,k)\in\mathcal{I}$, $d_i$ is given by
\begin{align}
d_i = \frac{1}{7}(3 + 2(\cos\theta_j+\cos\theta_k)).\label{eqn:di}
\end{align}
\end{prop}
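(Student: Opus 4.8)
The plan is to compute the arithmetic mean $\bar R$ directly, exploiting the common factors $U$ and $V^T$ shared by all seven sigma points, and then to verify that the resulting central factor is diagonal. First I would substitute $M=UV^T$ and $R_i(\pm\theta_i)=U\exp(\pm\theta_i\hat{e}_i)V^T$ from \refeqn{Ri} into the definition \refeqn{barR}, and factor $U$ out on the left and $V^T$ out on the right. This gives $\bar R = U\tilde D V^T$ with $\tilde D = \frac{1}{7}[I_{3\times 3} + \sum_{i=1}^3(\exp(\theta_i\hat{e}_i)+\exp(-\theta_i\hat{e}_i))]$, so the whole problem reduces to evaluating the bracketed sum of matrix exponentials and showing it is diagonal.

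The key step is pairing each $+\theta_i$ sigma point with its $-\theta_i$ partner. Applying Rodrigues' formula $\exp(\pm\theta_i\hat{e}_i)=I_{3\times 3}\pm\sin\theta_i\,\hat{e}_i+(1-\cos\theta_i)\hat{e}_i^2$, the skew-symmetric $\sin\theta_i\,\hat{e}_i$ terms cancel, leaving $\exp(\theta_i\hat{e}_i)+\exp(-\theta_i\hat{e}_i)=2I_{3\times 3}+2(1-\cos\theta_i)\hat{e}_i^2$. This cancellation is exactly why the sigma points are placed symmetrically about the mean, and it is what guarantees $\tilde D$ is symmetric. Next I would use the identity $\hat{e}_i^2=e_ie_i^T-I_{3\times 3}$ to rewrite each paired term as $2\cos\theta_i\,I_{3\times 3}+2(1-\cos\theta_i)e_ie_i^T$, which is diagonal.

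Summing over $i\in\{1,2,3\}$ and using $\sum_{i=1}^3 e_ie_i^T=I_{3\times 3}$, the bracketed sum becomes a diagonal matrix whose $i$-th entry collects a contribution $2\sum_l\cos\theta_l$ from the identity pieces and $2(1-\cos\theta_i)$ from the $e_ie_i^T$ piece; adding the $1$ coming from $M$ and simplifying via $\sum_l\cos\theta_l-\cos\theta_i=\cos\theta_j+\cos\theta_k$ for $(i,j,k)\in\mathcal{I}$ yields exactly $3+2(\cos\theta_j+\cos\theta_k)$. Dividing by $7$ reproduces \refeqn{di}, so $\tilde D=D=\mathrm{diag}[d_1,d_2,d_3]$ and hence $\bar R=UDV^T$.

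Finally, since $U,V\in\SO$ are orthonormal and $D$ is diagonal, the factorization $\bar R=UDV^T$ qualifies as a singular value decomposition provided the diagonal entries are non-negative. The only point requiring care is this non-negativity: each $d_i$ a priori lies in $[-\frac{1}{7},1]$, so one must invoke the standing assumption that the spread parameter $\sigma$, and hence each $\cos\theta_i$ through \refeqn{costhetai}, is chosen so that the distribution is concentrated enough to keep $d_i\ge 0$. Apart from this caveat I expect the computation to be entirely routine; the only conceptual content is the skew-symmetric cancellation and the observation that $D$ is diagonal, which together identify $U$ and $V$ as the singular vectors inherited directly from the parameter $F$.
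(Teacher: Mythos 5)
Your proof is correct and takes essentially the same approach as the paper: factor $U$ and $V^T$ out of the mean, pair the $\pm\theta_i$ sigma points so the skew-symmetric terms in Rodrigues' formula cancel, and collect the diagonal entries to obtain \refeqn{di}. Your closing caveat---that the $d_i$ must be non-negative (guaranteed only for sufficiently concentrated distributions, since a priori $d_i\in[-\tfrac{1}{7},1]$) for $UDV^T$ to be a genuine singular value decomposition---is a legitimate point that the paper's proof passes over in silence.
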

\begin{proof}
See Appendix \ref{sec:UT}.
\end{proof}

Therefore, for given sigma points, one can find the corresponding matrix parameter $F$ as follow: (i) the matrices $U,V$ are obtained from \refeqn{UDV}; (ii) solve \refeqn{di} for $(\cos\theta_1,\cos\theta_2,\cos\theta_3)$, which can be used to determine $(s_1,s_2,s_3)$ from \refeqn{costhetai}; (iii) $F=USV^T$. 

Based on the proposed unscented transform and its inverse, we construct a Bayesian estimator as follows. 

\subsection{Unscented Attitude Estimation}


Consider a stochastic differential equation on $\SO$,
\begin{align}
(R^T dR)^\vee = \Omega_z + w_\Omega,\label{eqn:SDE}
\end{align}
where $\Omega_z,w_\Omega\in\Re^3$ are the measured angular velocity and the angular velocity measurement error, respectively. It is assumed that the value of $\Omega_z$ is provided by an angular velocity sensor. The measurement error $w_\Omega$ is random, but its distribution is known. Suppose that the attitude is also measured by a sensor, such as an inertial measurement unit, and the attitude measurement $R_z\in\SO$ is given by
\begin{align}
R_z = R W_R,\label{eqn:Rz}
\end{align}
where $W_R\in\SO$ is an attitude measurement error, and $W_R\sim\mathcal{M}(F_z)$ for a known matrix parameter $F_z\in\Re^{3\times 3}$. 

Consider a discrete time sequence $\{t_0,t_1,\ldots\}$. The attitude estimation problem considered in this paper is to find the matrix parameter $F_{k+1}$ that approximates the estimated attitude distribution at $t=t_{k+1}$ via $\mathcal{M}(F_{k+1})$ for given $F_k$, $R_{z_k}$ and $\Omega_{z_k}$ with the assumption that $R_k\sim\mathcal{M}(F_k)$. Here, the subscript $k$ denotes the value of a variable at $t=t_k$. 

The proposed estimator is composed of a propagation step and a measurement update step. 

\paragraph{Propagation} The propagation step is defined via the unscented transform as follows.
\begin{itemize}
\item[(i)] Given $F_k$, seven sigma points at $t=t_k$, namely $R^l_k$ for $l\in\{1,\ldots,7\}$ are computed via \refeqn{SP}.
\item[(ii)] Each sigma point is propagated to $t=t_{k+1}$ according to \refeqn{SDE}. For example, a second order Lie group method~\cite{HaiLub00} can be applied to obtain
\begin{align}
R^l_{k+1} = R^l_k \exp\parenth{\frac{1}{2}h(\Omega_{z_k}+w_{\Omega_k}+\Omega_{z_{k+1}}+w_{\Omega_{k+1}})},
\end{align}
where $h=t_{k+1}-t_k$ is the time step. The angular velocity measurements $\Omega_z$ are from the sensor, and the measurement errors $w_\Omega$ are sampled from the given distribution.
\item[(iii)] Find $F_{k+1}$ from the propagated sigma points $R^l_{k+1}$ according to the results of Proposition \ref{prop:IUT}.
\end{itemize}
These steps are repeated until an attitude measurement is available. 

\paragraph{Measurement Update} Suppose that the attitude is measured at $t_{k+1}$. We wish to find the distribution for $R_{k+1}|R_{z_{k+1}}$. From now on, in this subsection, we do not specify the subscript $k+1$ for brevity.
Since $W_R\sim\mathcal{M}(F_z)$, \refeqn{Rz} implies 
\begin{align}
p(R_z|R) = \frac{1}{c(F_z)}\exp(\trs {F_z^T R^T R_z}), \label{eqn:pRzR}
\end{align}
where we have used $c(RF_z)=c(F_z)$. According to Bayes' rule, the posterior distribution is
\begin{align*}
p(R|R_z) &= \frac{1}{a}p(R_z|R)P(R),
\end{align*}
where $a$ is a normalizing constant independent of $R$. Since $R\sim\mathcal{M}(F)$, from \refeqn{pRzR},
\begin{align*}
p(R|R_z) &= \frac{1}{a c(F_z)} \exp(\trs {F_z^T R^T R_z}+\trs{F^T R}).\\
&= \frac{1}{c(F+ZF_z)} \exp(\trs {(F+R_zF_z^T)^T R}).
\end{align*}
Therefore, the posterior distribution for $R_{k+1}$ also follows a matrix Fisher distribution, i.e., 
$R_{k+1}\sim \mathcal{M}(F_{k+1}+R_{z_{k+1}}F_z^T)$.

\section{Numerical Example}

\begin{figure}
\centerline{
	\subfigure[True angular velocity: $\Omega_{true}(t)$ ($\mathrm{rad/s}$)]{
		\includegraphics[width=0.45\columnwidth]{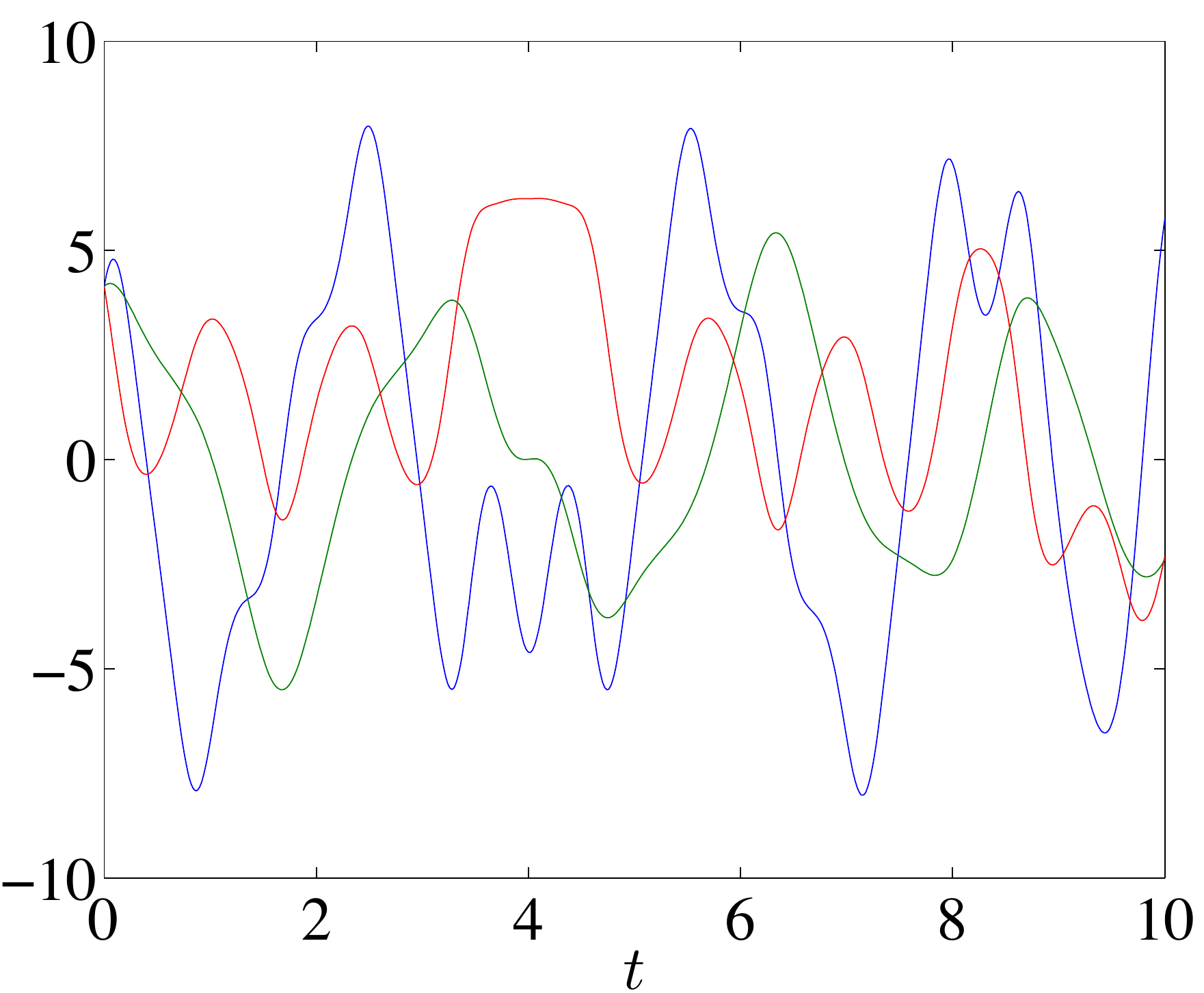}\label{fig:W_true}}
	\hfill
	\subfigure[Measured angular velocity: $\Omega_z(t)$ ($\mathrm{rad/s}$)]{
		\includegraphics[width=0.45\columnwidth]{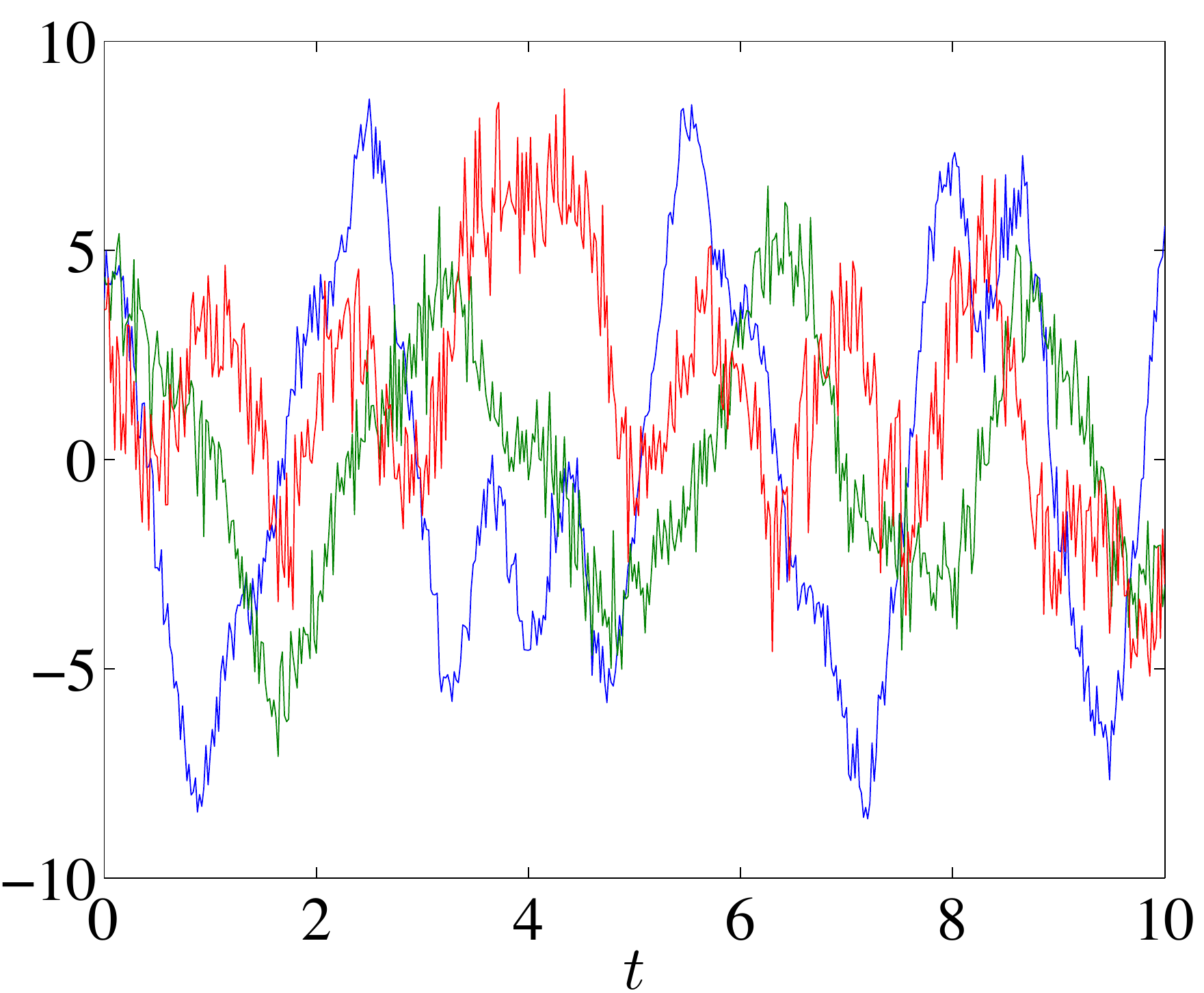}\label{fig:W_mea}}
}
\centerline{
	\subfigure[Visualization of $\mathcal{M}(F_z)$]{\hspace*{0.025\columnwidth}
		\includegraphics[width=0.35\columnwidth]{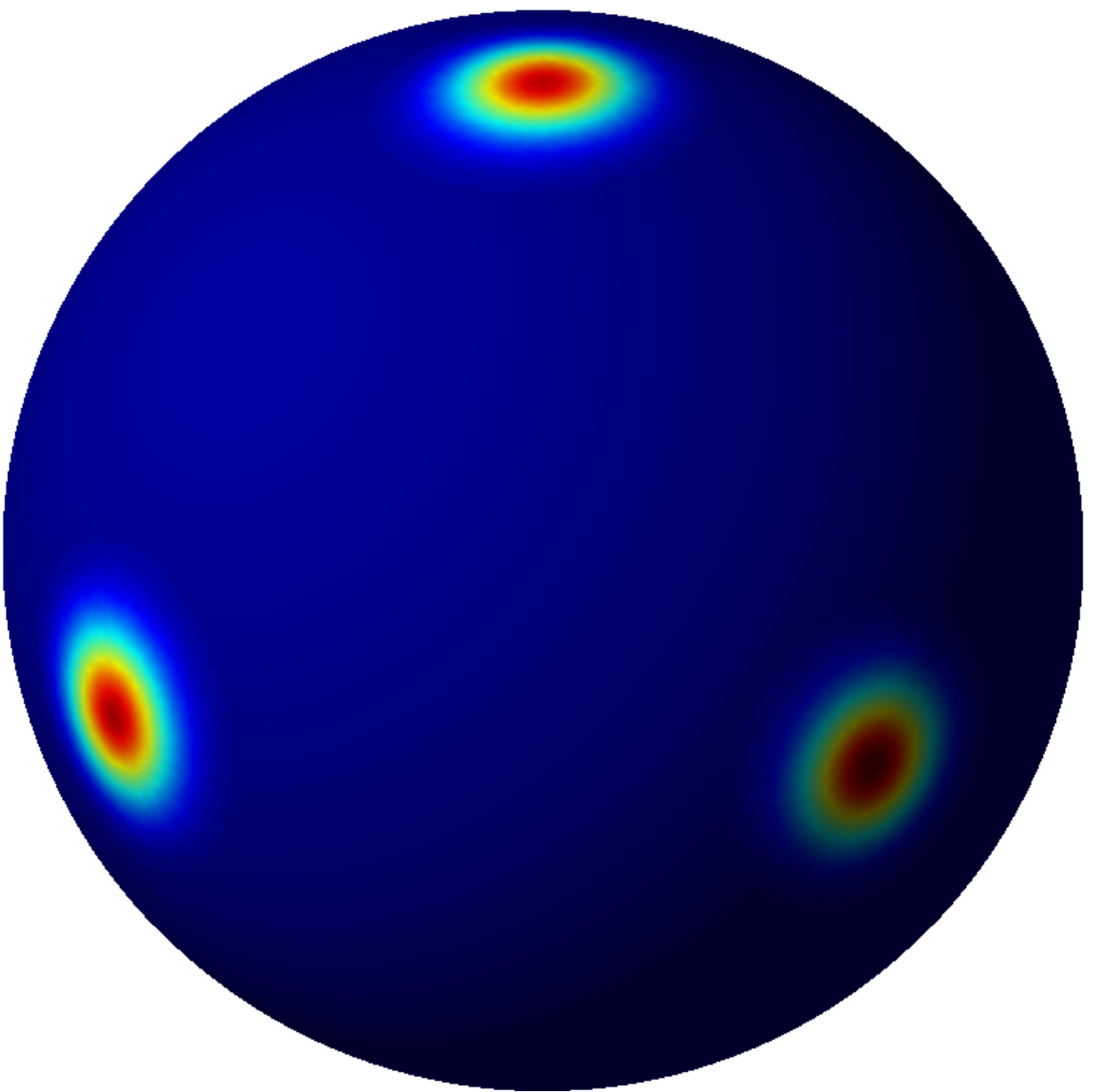}\label{fig:Fz}\hspace*{0.025\columnwidth}}
	\hfill
	\subfigure[Attitude measurement error (deg)]{
		\includegraphics[width=0.45\columnwidth]{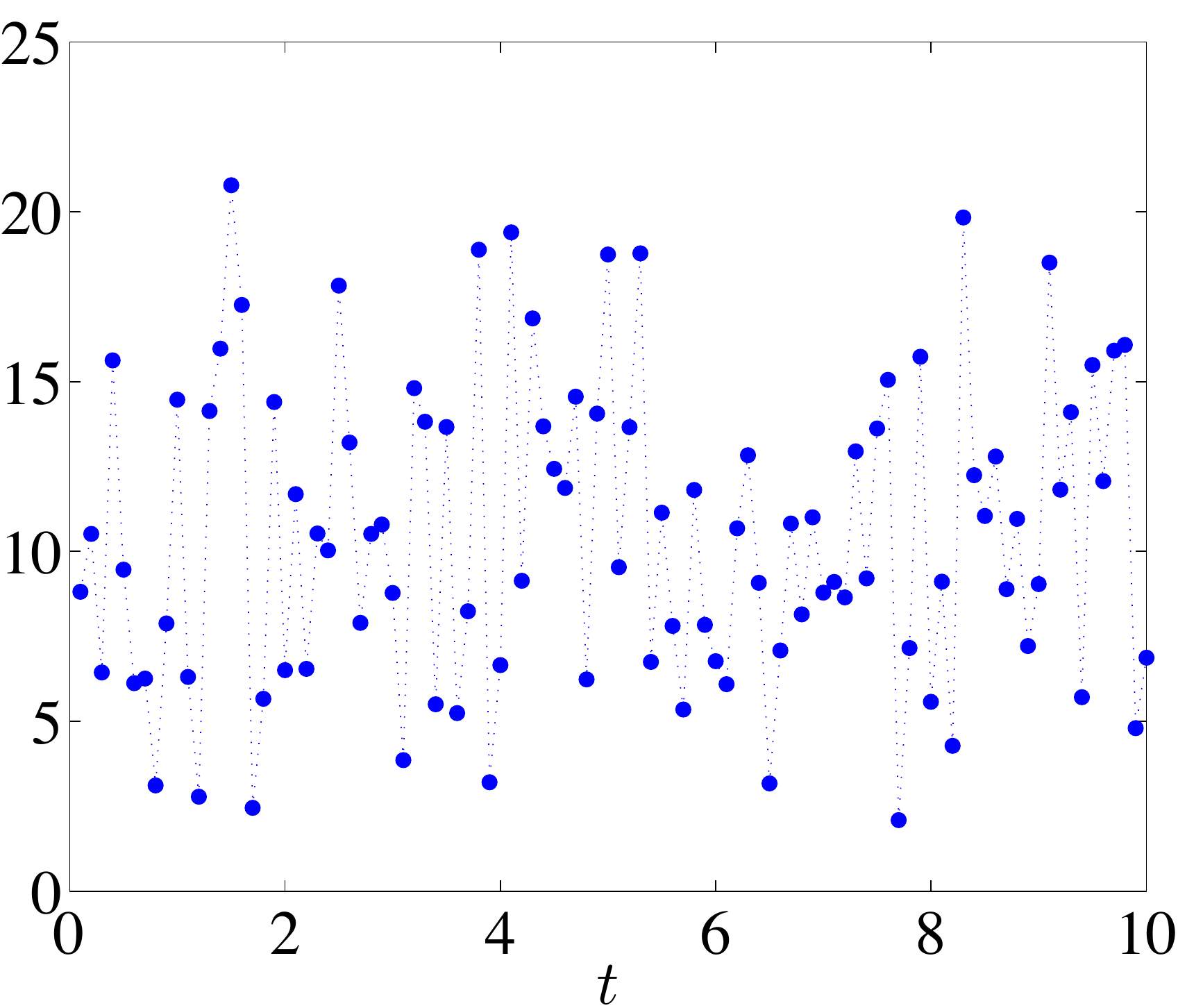}\label{fig:R_mea_err}}
}
\caption{True trajectory and measurement errors}\label{fig:true}
\end{figure}

We implement the proposed approach to a complex attitude dynamics for a 3D pendulum, which is a rigid body pendulum acting under a uniform gravity. It is shown that a 3D pendulum may exhibit highly irregular attitude maneuvers, and we adopt a particular nontrivial maneuver presented in~\cite{LeeChaPICDC07} as the \textit{true} attitude and angular velocity for the numerical example considered in this section. The initial true attitude and angular velocity are given by
\begin{align*}
R_{true}(0)= I,\quad \Omega_{true}(0)=[4.14,\,4.14,\,4.14]^T\,(\mathrm{rad/s}),
\end{align*}
and the resulting angular velocity trajectory is illustrated in \reffig{W_true}, which exhibits irregular rotational dynamics. 

It is assumed that the attitude and the angular velocity are measured at the rate of $10\,\mathrm{Hz}$ and $50\,\mathrm{Hz}$, respectively. The Fisher matrix for the attitude measurement error is chosen as $F_z= \mathrm{diag}[40,50,35]$, and the rotation matrix $W_R$ representing the attitude measurement error is sampled according to the rejection method described in~\cite{KenGan13}.  The matrix Fisher distribution for $F_z$, and the corresponding attitude measurement error for the sample  used in this numerical simulation are illustrated in \reffig{Fz} and \reffig{R_mea_err}, respectively. The mean attitude measurement error is $10.46^\circ$. The measurement error for the angular velocity is assumed to follow a normal distribution in $\Re^3$ with zero mean and the covariance matrix of $\mathrm{diag}[0.5^2,0.8^2,1^2]\,(\mathrm{rad/s})^2$. The angular velocity measurements are given in \reffig{W_mea}.

\subsection{Case I: Large initial estimate error}

We consider two cases depending on the estimate of the initial attitude. For Case I, the initial matrix parameter is
\begin{align*}
F(0) = 100 \exp (\pi\hat e_1),
\end{align*}
where the initial mean attitude is $M (0) = \exp (\pi\hat e_1)$, that corresponds to $180^\circ$ rotation of $R_{true}(0)$ about the first body-fixed axis. It is highly concentrated, since $S(0)=100I$ is large. In short, this represents the case where the estimator is falsely too confident about the incorrect attitude. 

The results of the proposed unscented attitude estimation are illustrated in \reffig{ES}, where the attitude estimation error is presented, and the degree of uncertainty in the estimates are measured via $\frac{1}{s_i}$. The estimation error rapidly reduces to $7.6^\circ$ from the initial error of $180^\circ$ after three attitude measurements at $t=0.3$, and the mean attitude error afterward is $5.6^\circ$. The uncertainties in the attitude increase until $t=0.3$ since the measurements strongly conflict with the initial estimate, but they decrease quickly after the attitude estimate converges. 

These can be also observed from the visualizations of $\mathcal{M}(F(t))$ for selected time instances in \reffig{visES}. Since the color shading of the figures is reinitialized in each figure, the value of the maximum probability density, corresponding to the dark red color, is specified as well. Initially, the probability distribution is highly concentrated, and it becomes dispersed a little at $t=0.08$ due to the angular velocity measurement error. But, after the initial attitude measurement is incorporated at $t=0.1$, the probability distributions for the second axis and the third axis become dispersed noticeably due to the conflict between the belief and the measurement. This is continued until $t=0.3$. But, later at $t=1$ and $t=10$, the estimated attitude distribution becomes concentrated about the true attitude.

\begin{figure}
\centerline{
	\subfigure[Attitude estimation error ($\mathrm{deg}$)]{\hspace*{0.02\columnwidth}
		\includegraphics[height=0.36\columnwidth]{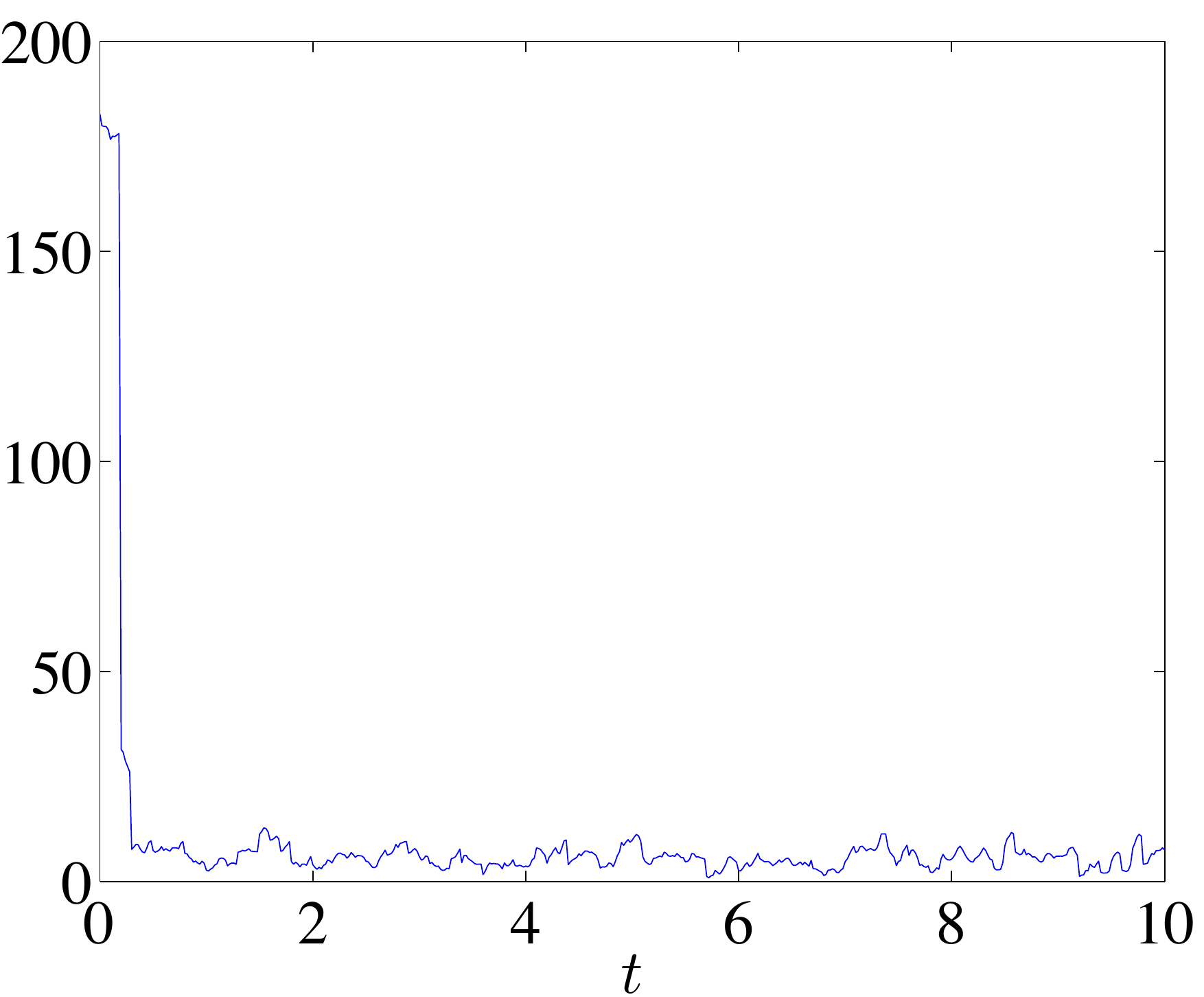}\label{fig:R_est_err}\hspace*{0.02\columnwidth}}
	\hfill
	\subfigure[Uncertainty measured by $1/s_i$]{\hspace*{0.02\columnwidth}
		\includegraphics[height=0.36\columnwidth]{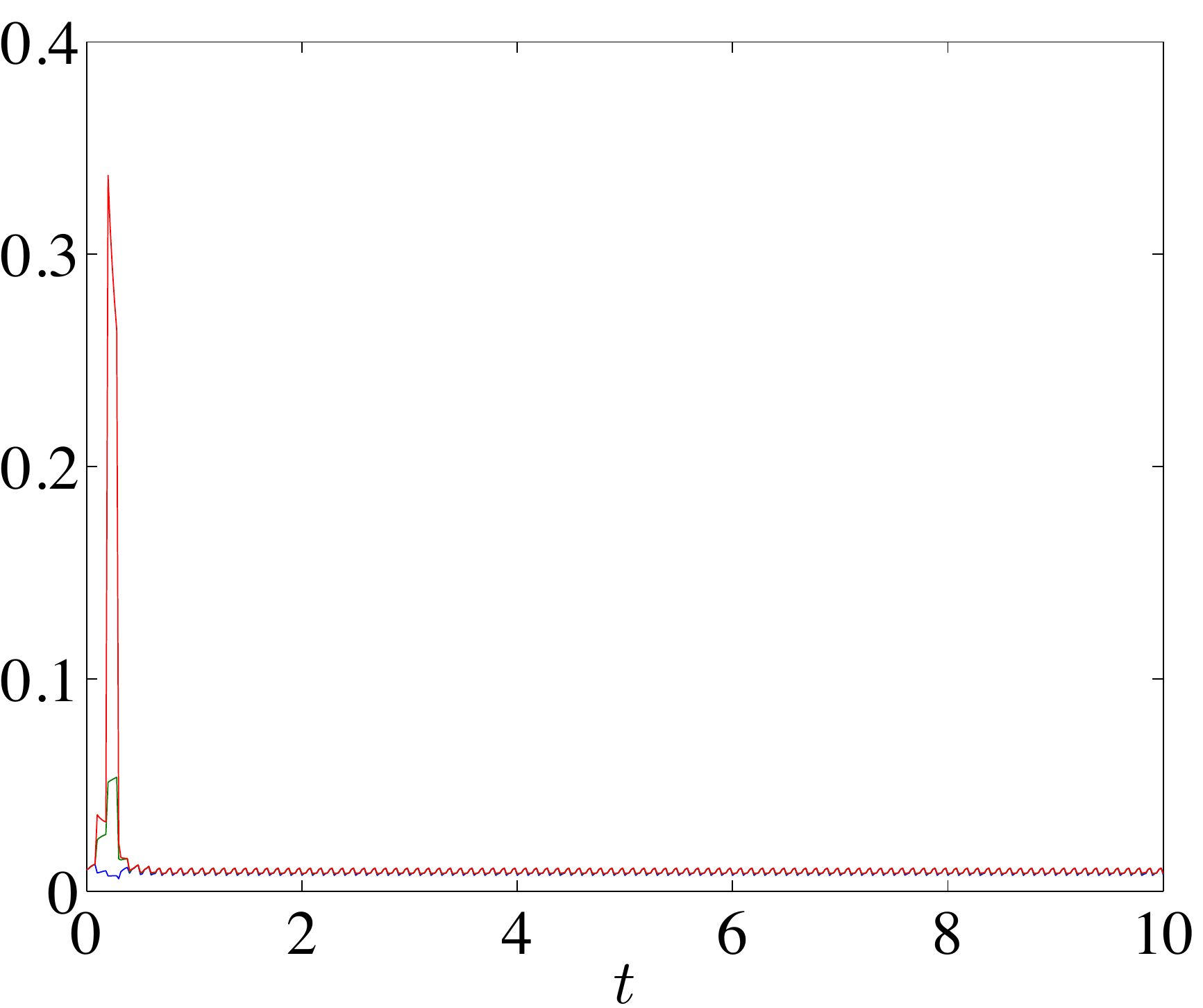}\label{fig:s}\hspace*{0.02\columnwidth}}
}
\caption{Case I: estimation results}\label{fig:ES}
\end{figure}
\begin{figure}
\centerline{
	\subfigure[$t=0$, $p_{\max}=1.41\times 10^4$]{\hspace*{0.06\columnwidth}
		\includegraphics[height=0.32\columnwidth]{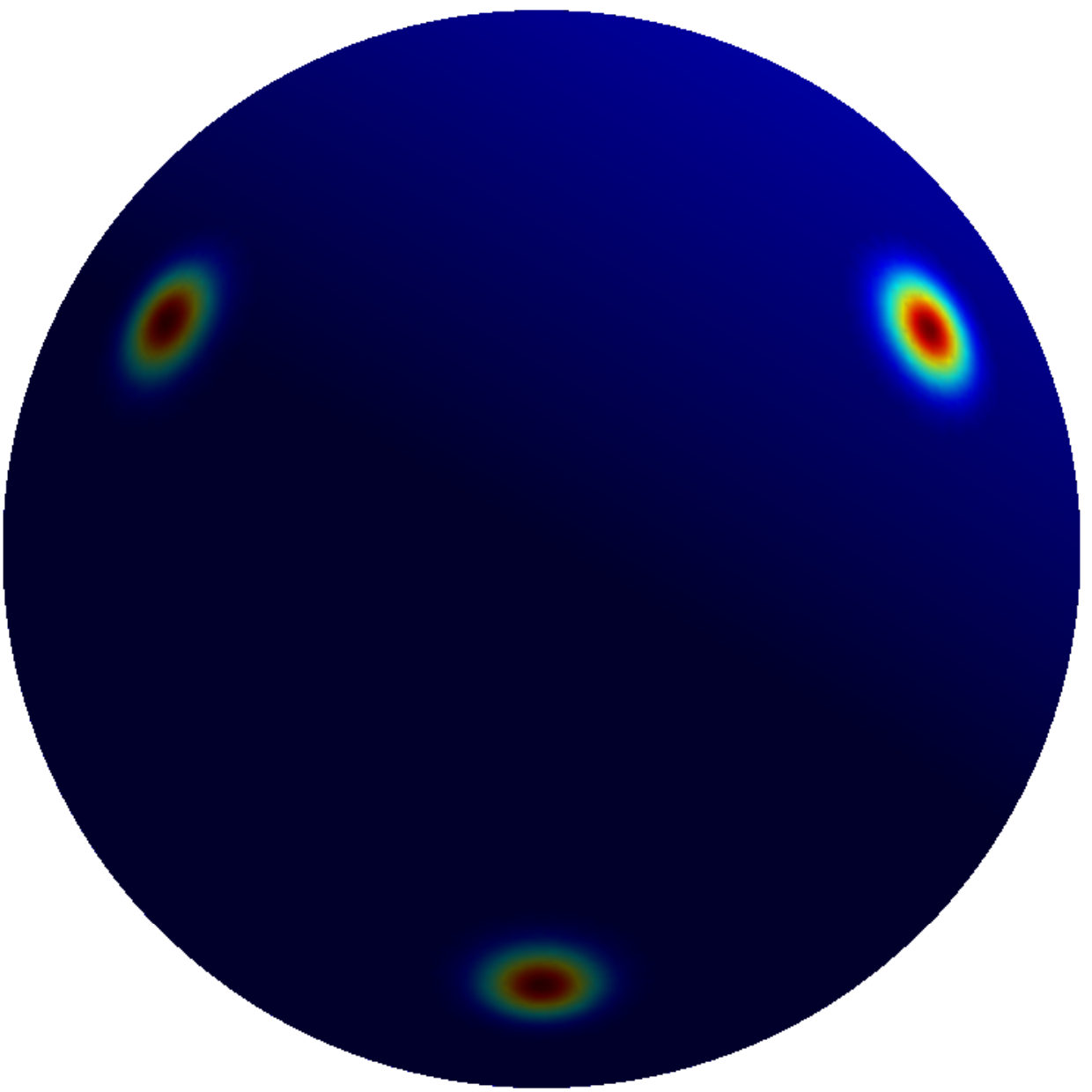}\label{fig:F_1}\hspace*{0.06\columnwidth}}
	\hfill
	\subfigure[$t=0.08$, $p_{\max}=9.92\times 10^3$]{\hspace*{0.06\columnwidth}
		\includegraphics[height=0.32\columnwidth]{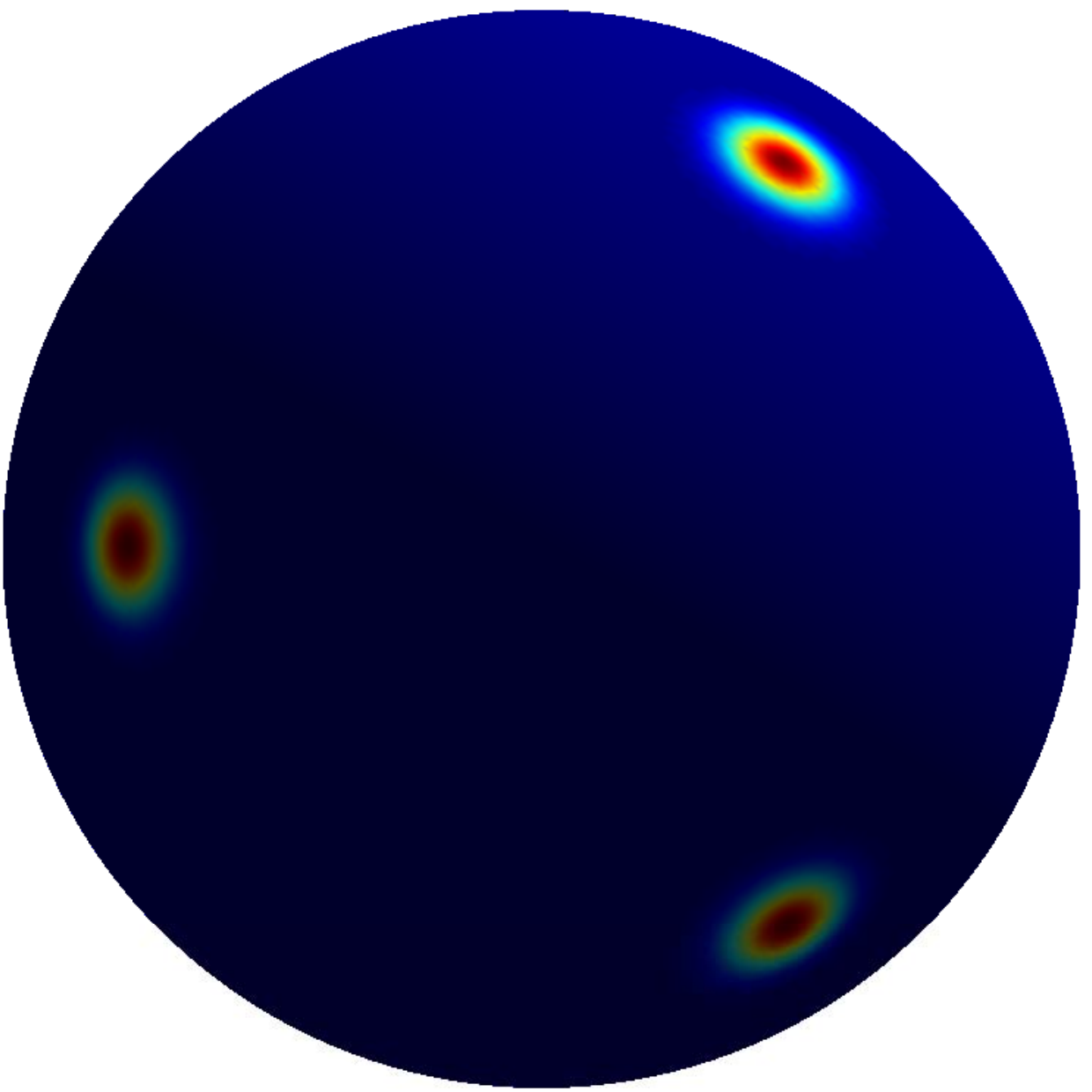}\label{fig:F_5}\hspace*{0.06\columnwidth}}
}
\centerline{
	\subfigure[$t=0.1$, $p_{\max}=6.27\times 10^3$]{\hspace*{0.06\columnwidth}
		\includegraphics[height=0.32\columnwidth]{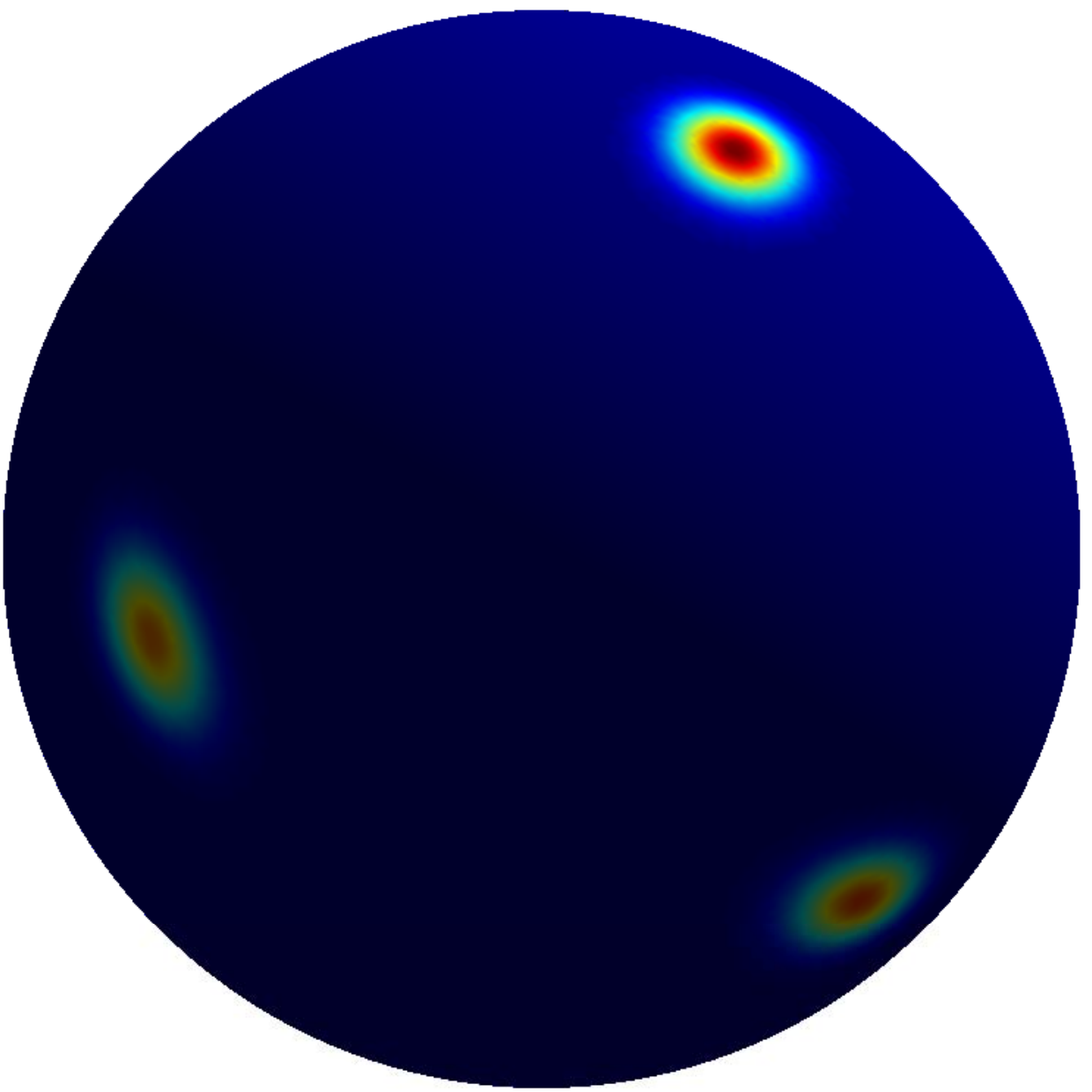}\label{fig:F_6}\hspace*{0.06\columnwidth}}
	\hfill
	\subfigure[$t=0.3$, $p_{\max}=1.18\times 10^4$]{\hspace*{0.06\columnwidth}
		\includegraphics[height=0.32\columnwidth]{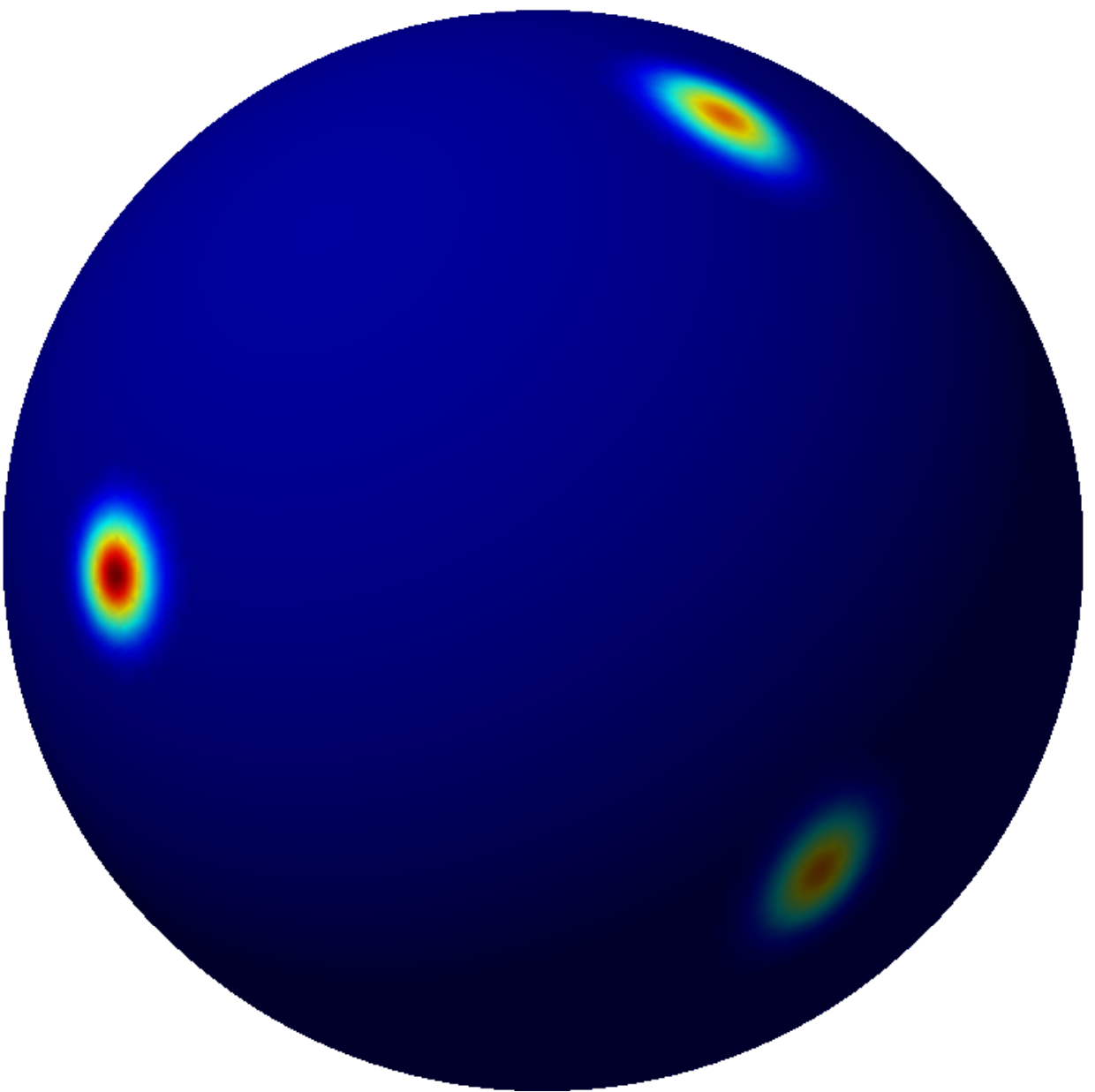}\label{fig:F_16}\hspace*{0.06\columnwidth}}
}
\centerline{
	\subfigure[$t=1$, $p_{\max}=2.00\times 10^4$]{\hspace*{0.06\columnwidth}
		\includegraphics[height=0.32\columnwidth]{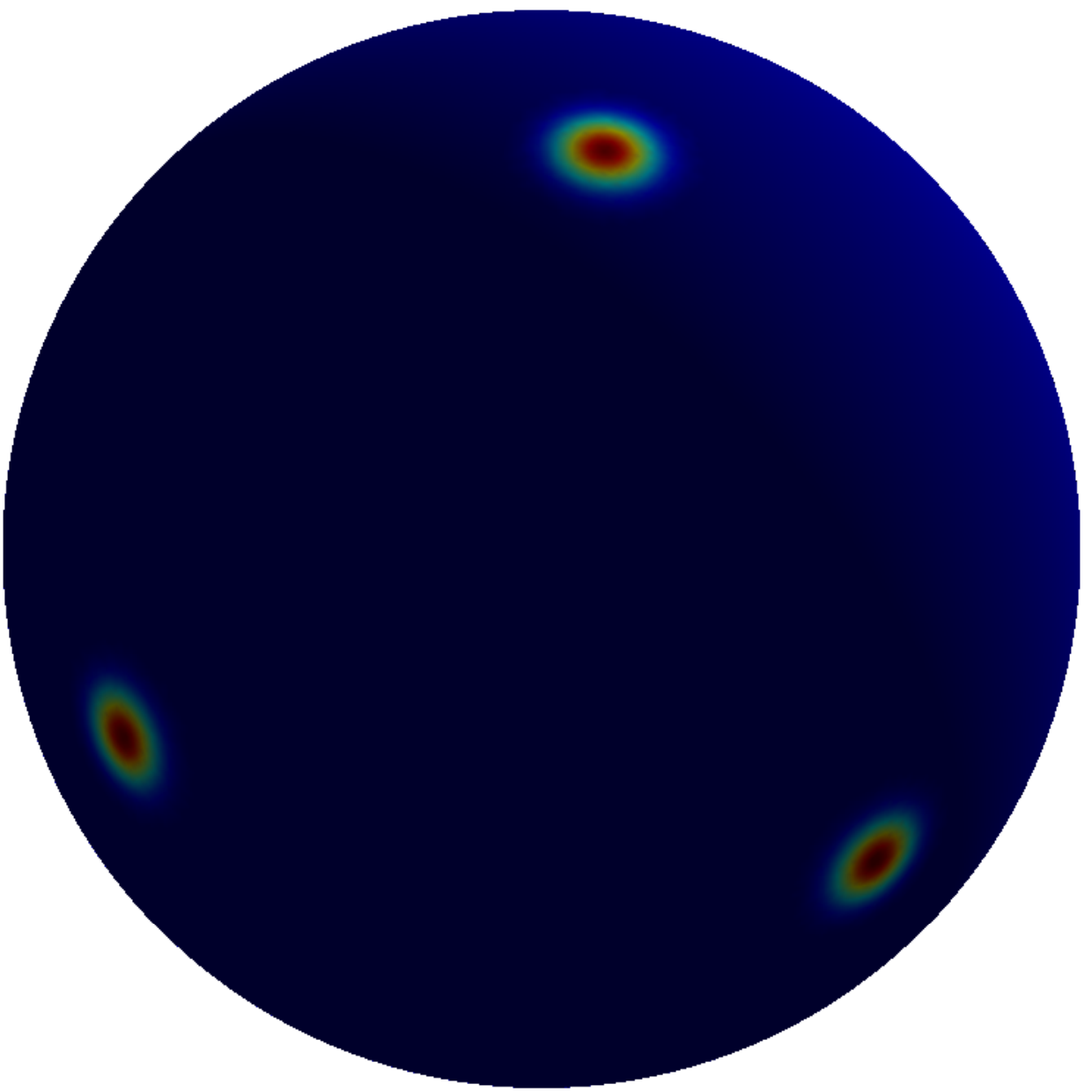}\label{fig:F_51}\hspace*{0.06\columnwidth}}
	\hfill
	\subfigure[$t=10$, $p_{\max}=2.02\times 10^4$]{\hspace*{0.06\columnwidth}
		\includegraphics[height=0.32\columnwidth]{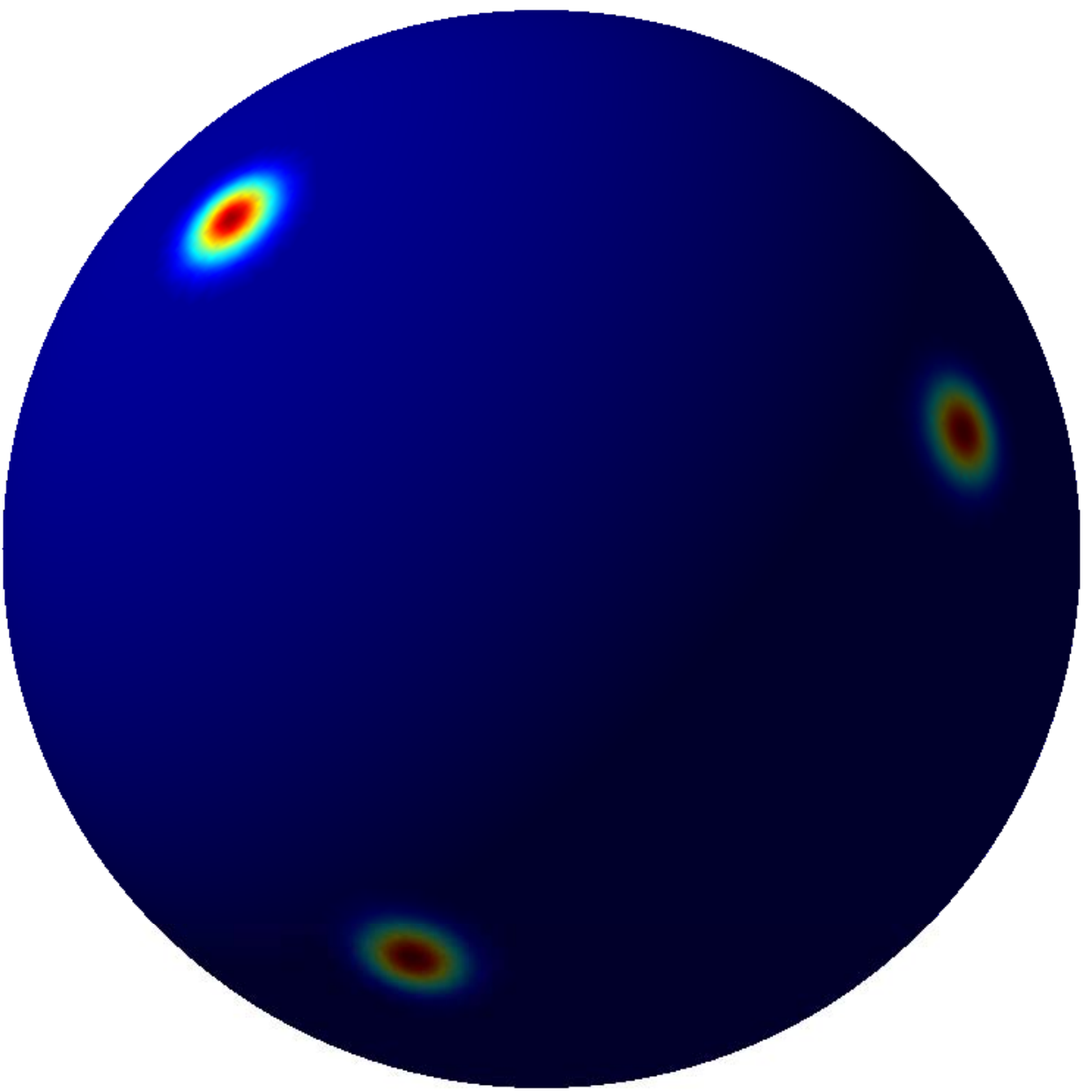}\label{fig:F_501}\hspace*{0.06\columnwidth}}
}
\caption{Case I: visualizations of $\mathcal{M}(F)$}\label{fig:visES}
\end{figure}

\subsection{Case II: Large initial uncertainty}

For the second case, the matrix parameter is chosen as
\begin{align*}
F(0)= \mathrm{diag}[2,1,0.5]\exp(0.5\pi\hat e_1),
\end{align*}
where the initial mean attitude has $90^\circ$ error, and it is largely diffused as $S(0)=\mathrm{diag}[2,1,0.5]$ is relatively small. This corresponds to the case with a large initial uncertainty. 

\begin{figure}
\centerline{
	\subfigure[Attitude estimation error ($\mathrm{deg}$)]{\hspace*{0.02\columnwidth}
		\includegraphics[height=0.36\columnwidth]{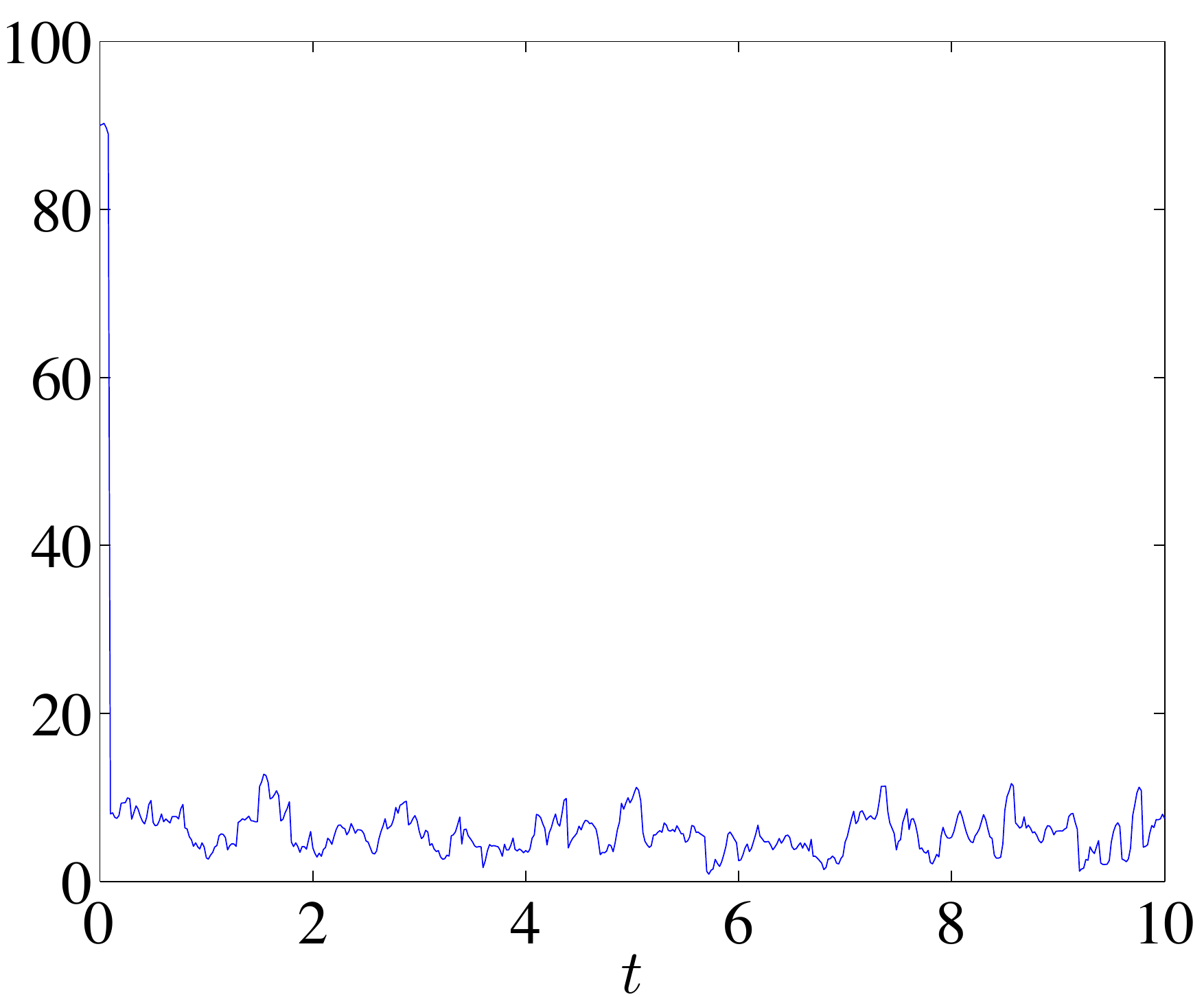}\label{fig:R_est_err_1}\hspace*{0.02\columnwidth}}
	\hfill
	\subfigure[Uncertainty measured by $1/s_i$]{\hspace*{0.02\columnwidth}
		\includegraphics[height=0.36\columnwidth]{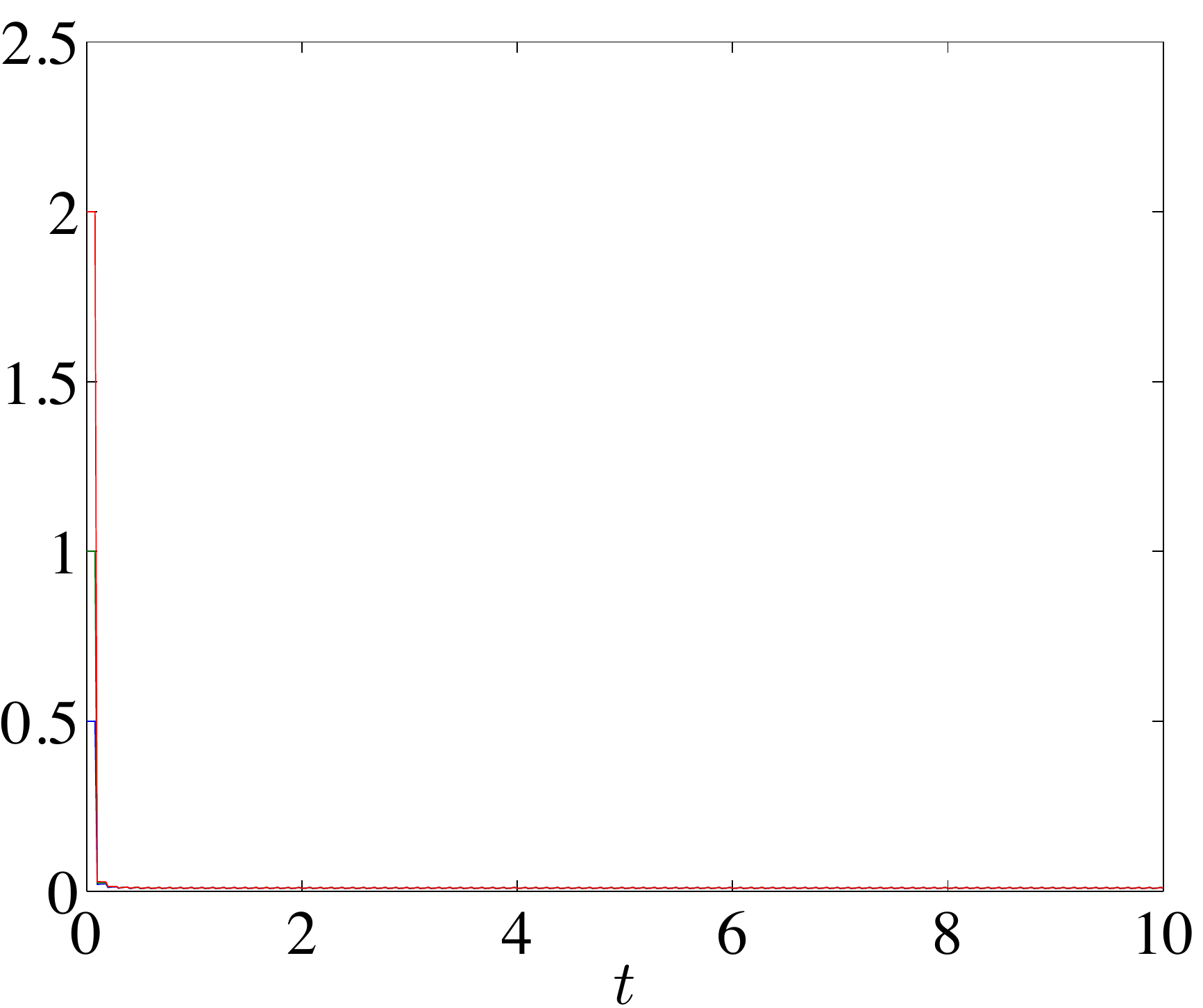}\label{fig:s_1}\hspace*{0.02\columnwidth}}
}
\caption{Case II: estimation results}\label{fig:ES_1}
\end{figure}
\begin{figure}
\vspace*{-0.1cm}
\centerline{
	\subfigure[$t=0$, $p_{\max}=1.30\times 10^1$]{\hspace*{0.06\columnwidth}
		\includegraphics[height=0.32\columnwidth]{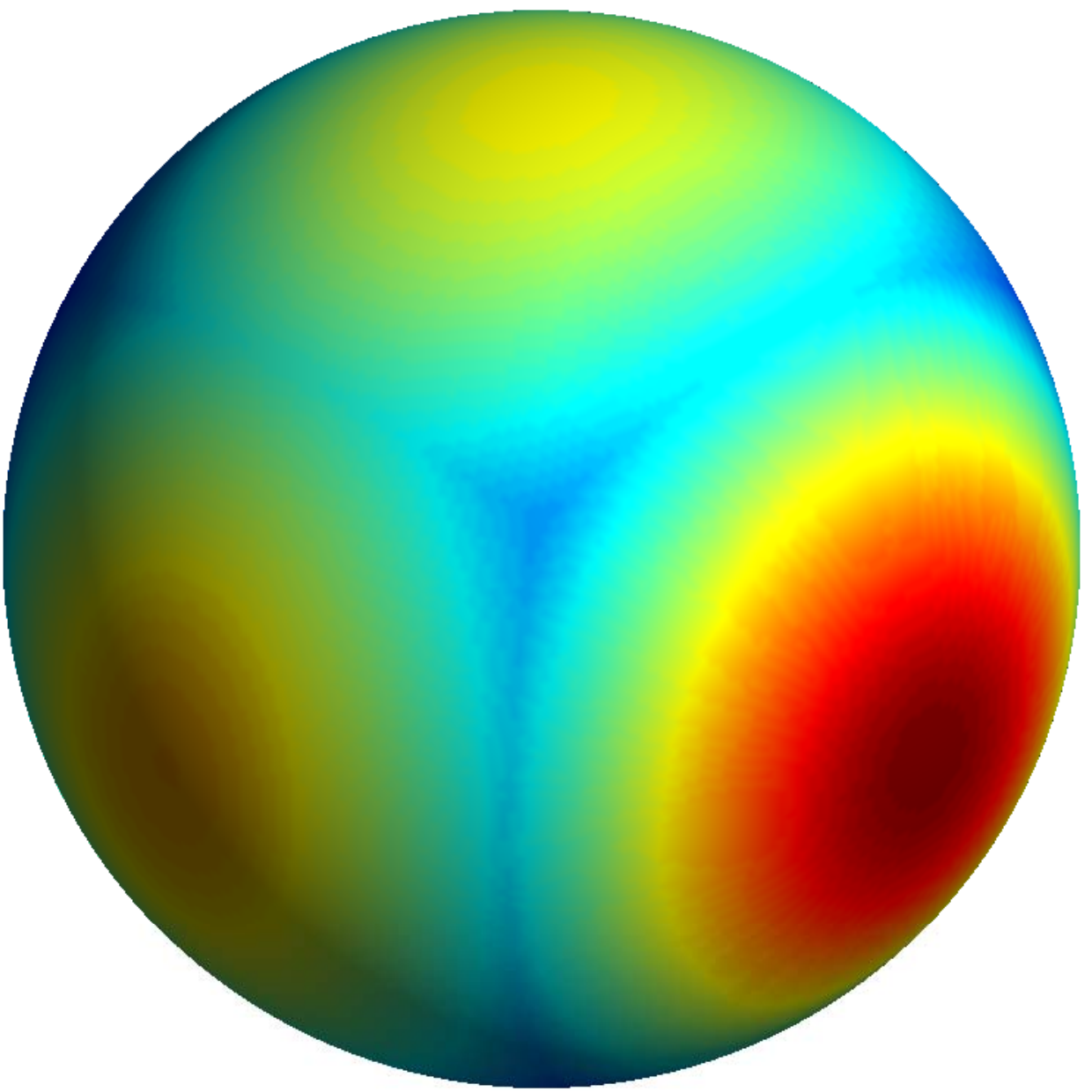}\label{fig:F_1_1}\hspace*{0.06\columnwidth}}
	\hfill
	\subfigure[$t=0.08$, $p_{\max}=1.30\times 10^1$]{\hspace*{0.06\columnwidth}
		\includegraphics[height=0.32\columnwidth]{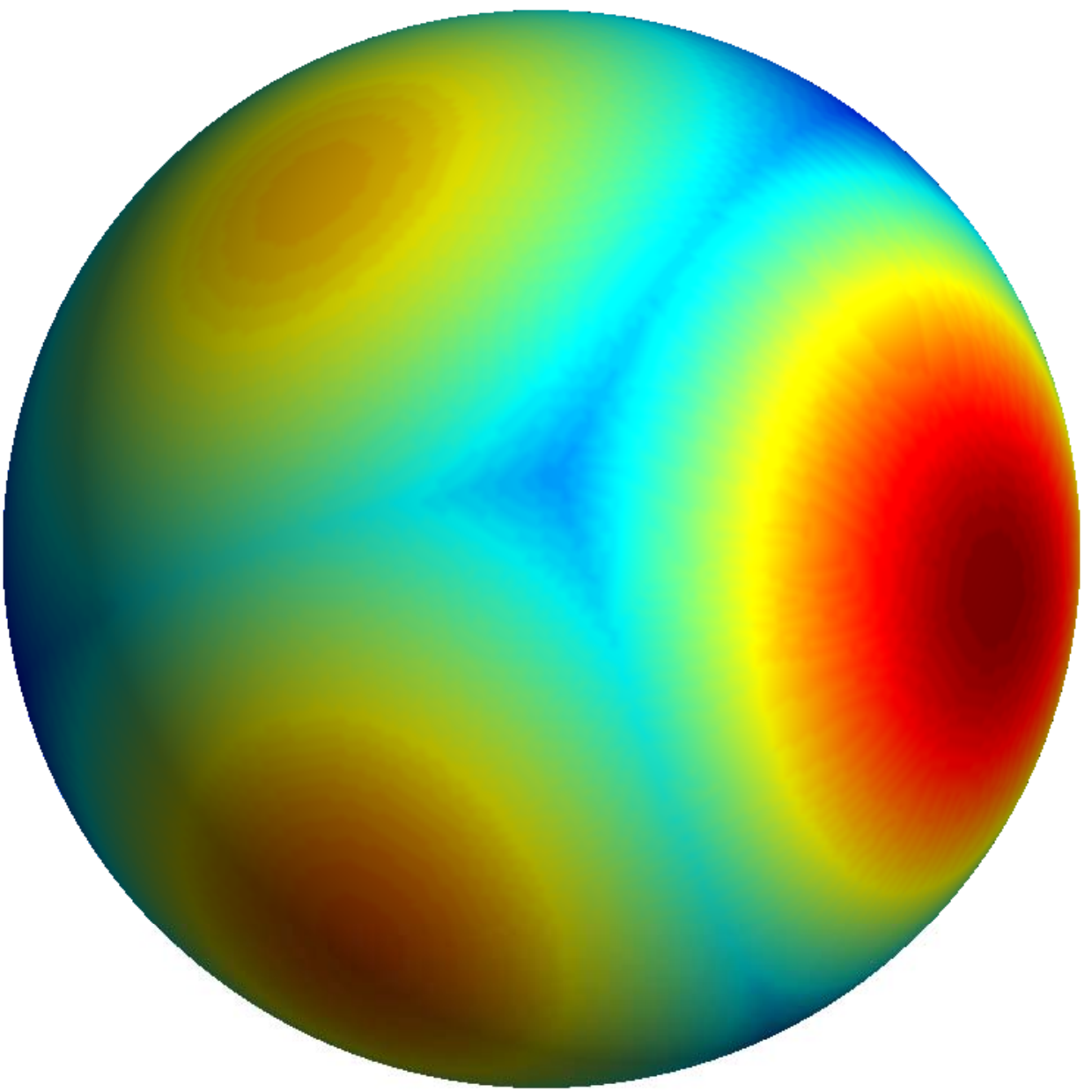}\label{fig:F_5_1}\hspace*{0.06\columnwidth}}
}
\centerline{
	\subfigure[$t=0.1$, $p_{\max}=3.86\times 10^3$]{\hspace*{0.06\columnwidth}
		\includegraphics[height=0.32\columnwidth]{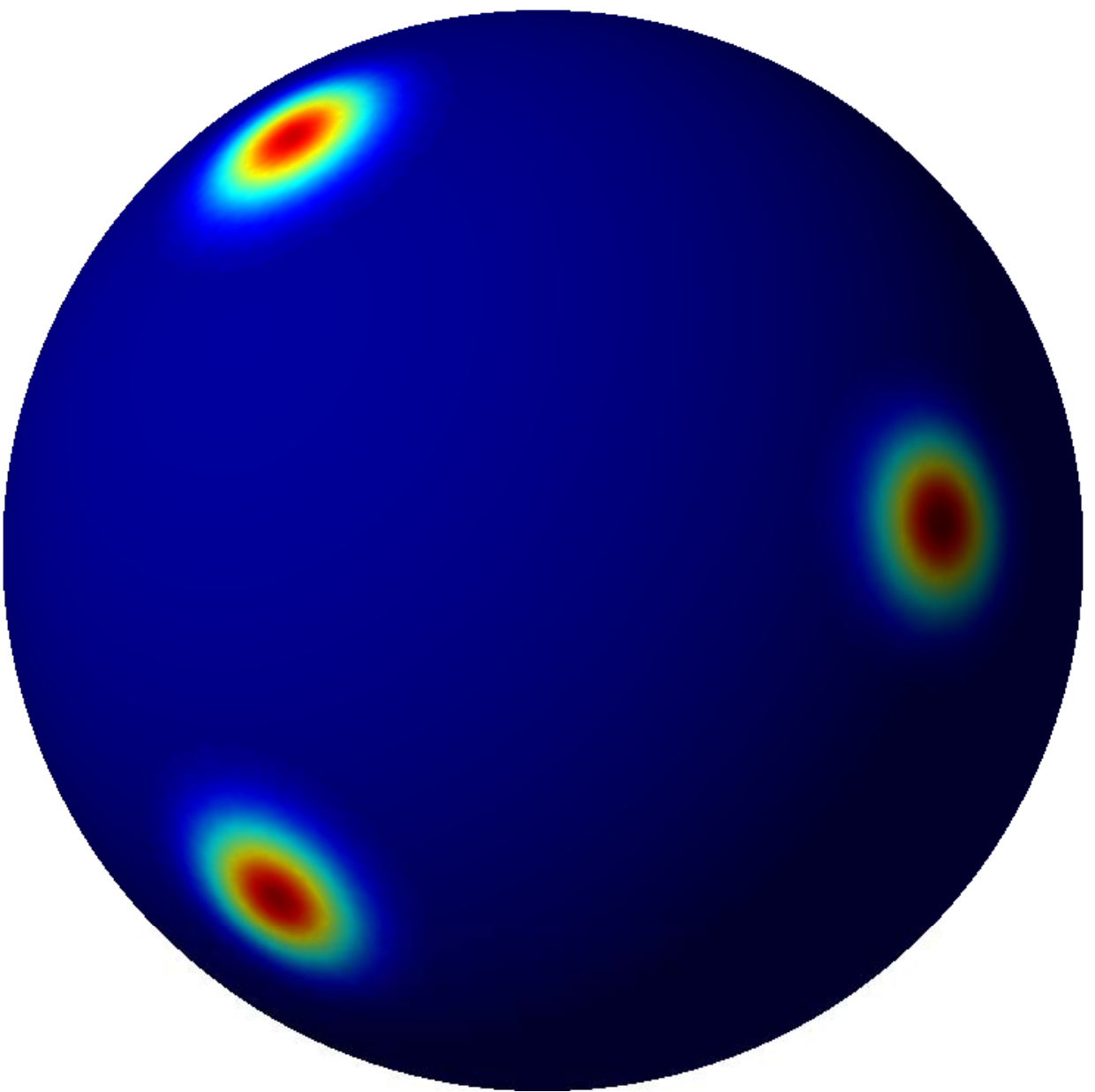}\label{fig:F_6_1}\hspace*{0.06\columnwidth}}
	\hfill
	\subfigure[$t=0.18$, $p_{\max}=3.71\times 10^3$]{\hspace*{0.06\columnwidth}
		\includegraphics[height=0.32\columnwidth]{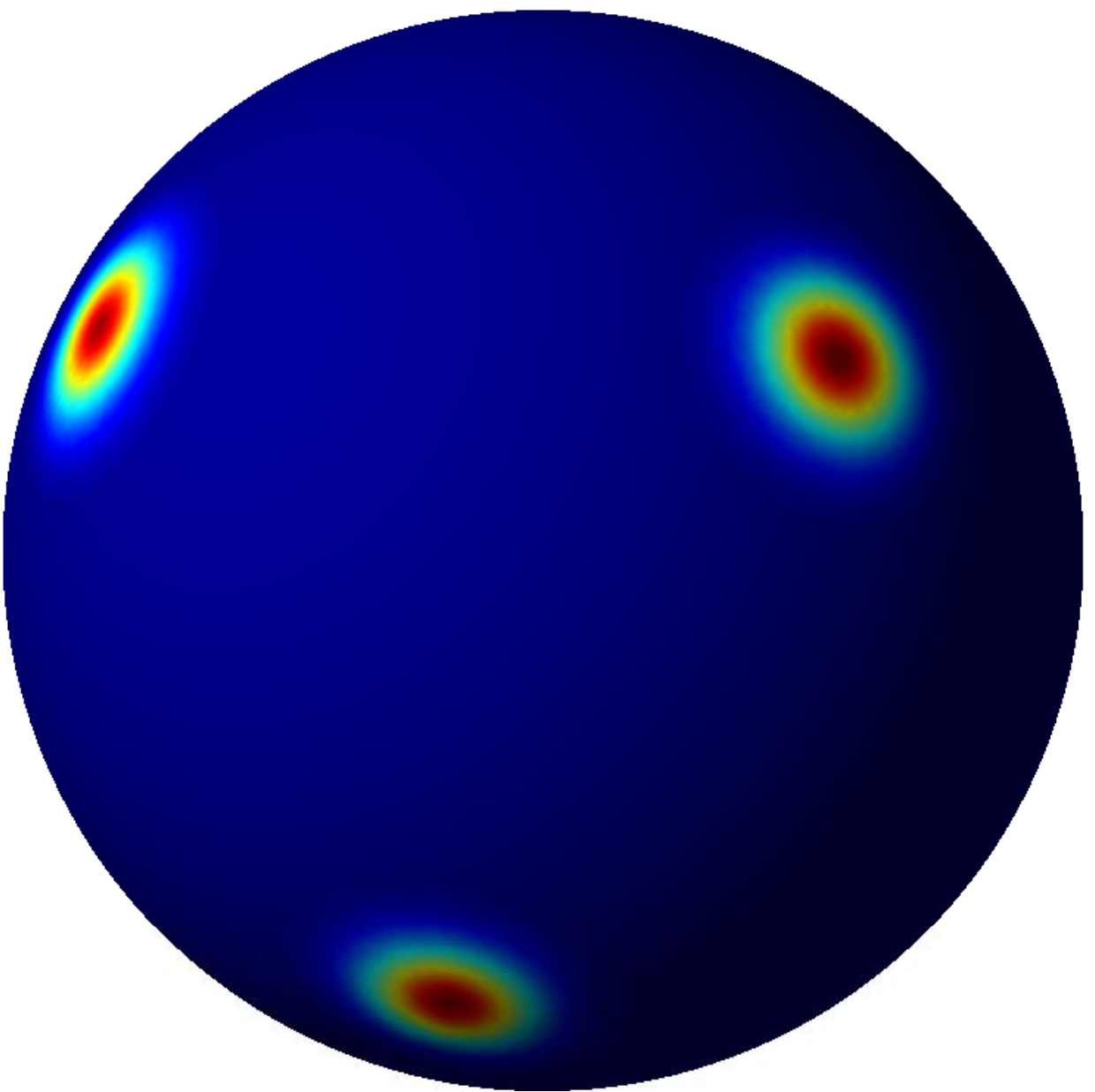}\label{fig:F_10_1}\hspace*{0.06\columnwidth}}
}
\centerline{
	\subfigure[$t=1$, $p_{\max}=1.05\times 10^4$]{\hspace*{0.06\columnwidth}
		\includegraphics[height=0.32\columnwidth]{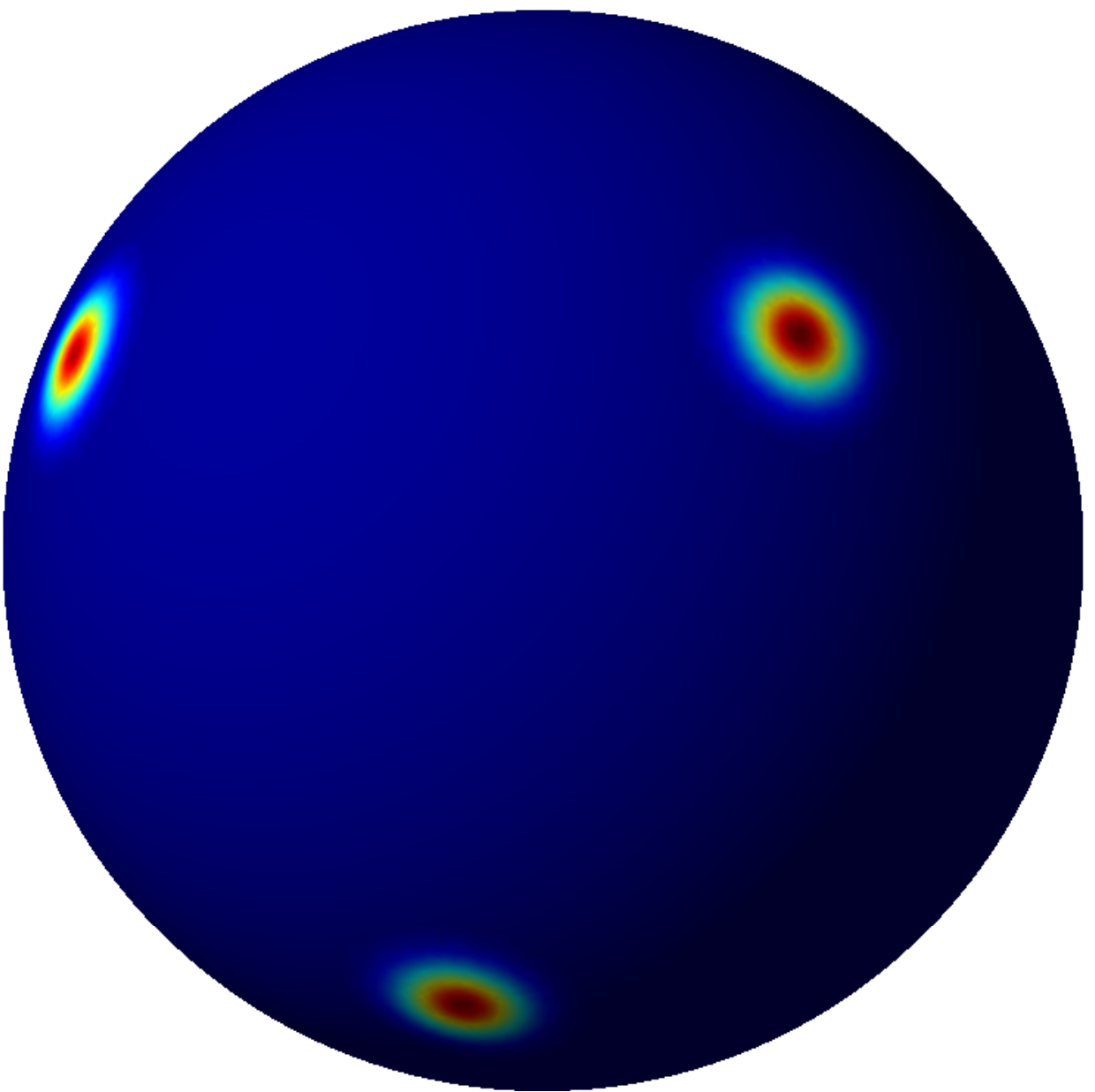}\label{fig:F_11_1}\hspace*{0.06\columnwidth}}
	\hfill
	\subfigure[$t=10$, $p_{\max}=2.02\times 10^4$]{\hspace*{0.06\columnwidth}
		\includegraphics[height=0.32\columnwidth]{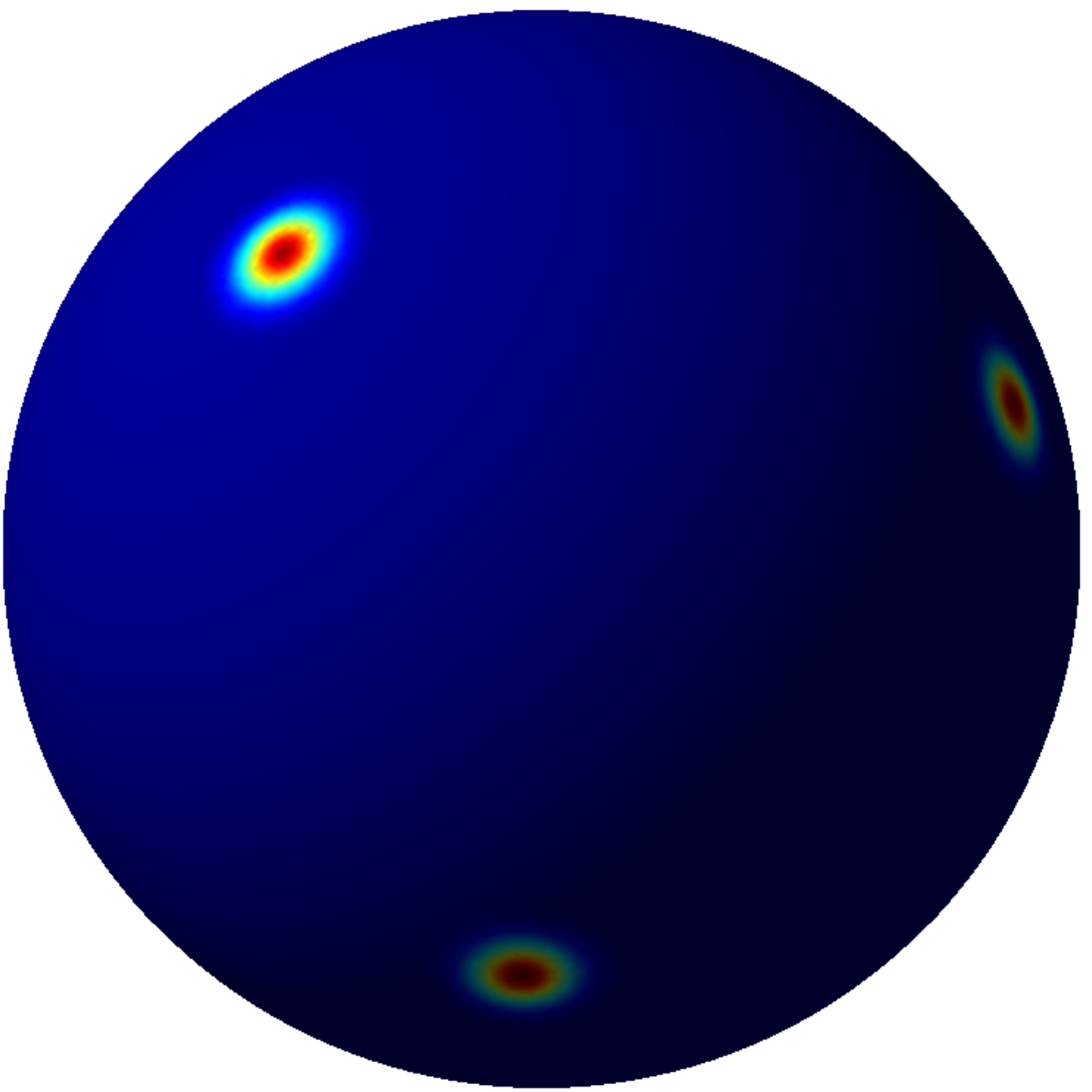}\label{fig:F_501_1}\hspace*{0.06\columnwidth}}
}
\caption{Case II: visualizations of $\mathcal{M}(F)$}\label{fig:visES_1}
\end{figure}

The corresponding numerical simulation results are presented in \reffig{ES_1} and \ref{fig:visES_1}. Both the attitude estimation error and the uncertainty decrease over time, since there is no strong conflict between the measurement and the estimate as opposed to the first case. In \reffig{visES_1}, it is illustrated that the estimated distribution becomes concentrated, especially after the first attitude measurement is received at $t=0.1$. 

The presented cases for attitude estimation are particularly challenging due to the following reasons: (i) the estimator is initially strongly confident about an incorrect attitude with the maximum error $180^\circ$, or the initial uncertainty is large; (ii) the considered attitude dynamics is swift and complex; (iii) both attitude and angular velocity measurement errors are relatively large; (iv) the attitude measurements are infrequent. These correspond to the cases where attitude estimators developed in terms of local coordinates or linearization tend to diverge. It is shown that the proposed approach developed directly on the special orthogonal group exhibits satisfactory, reasonable results even for the presented challenging cases.

\appendix

\subsection{Proof of Proposition 1}\label{sec:PfC}

First we show $c(F)=c(S)$. Substituting \refeqn{KM} into \refeqn{cF},
\begin{align*}
c(F) = \int_{\SO} \exp(\trs{SU^T R V}) dR. 
\end{align*}
Let $Q=U^T RV\in\SO$. The transformation from $R$ to $Q$ is volume-preserving, as $Q$ is obtained by multiplying rotation matrices, and $\SO$ is transformed into $\SO$ from the transform. We also have $dR=d(U^T RV)=dQ$ from the invariance of the Haar measure. Using these, we have
\begin{align}
c(F) = \int_{\SO} \exp(\trs{SQ}) dQ=c(S).  \label{eqn:cS0}
\end{align}

Next, we show \refeqn{cS}. Let $x\in\Sph^3$ be the quaternion corresponding to the rotation matrix $Q$, where the three-sphere is denoted by $\Sph^3=\{x\in\Re^4\,|\, \|x\|=1\}$. Let $q\in\Re^3$ and $q_4\in\Re$ be the vector part and the scalar part of the quaternion $x$, i.e., $x=[q^T,q_4]^T$.  It is well known that the corresponding rotation matrix is obtained by
\begin{align}
Q(x) = (q_4^2-q^Tq)I + 2qq^T + 2q_4\hat q, \label{eqn:Qx}
\end{align}
(see, for example, \cite{ShuJAS93}). Using several properties of the trace, 
\begin{align*}
\trs{S Q(x)} & = \trs{(q_4^2 - q^T q) S + 2qq^TS}
 = x^T B x,
\end{align*}
where the matrix $B\in\Re^{4\times 4}$ is given by
\begin{align}
B= \begin{bmatrix} 2S-\trs{S}I & 0_{3\times 1} \\ 0_{1\times 3} & \trs{S} \end{bmatrix}.\label{eqn:B}
\end{align}

Substituting this into \refeqn{cS0}, and by changing variables, 
\begin{align}
c(S) = \int_{\mathsf{RP}^3} \exp(x^T B x) \mathcal{J}(x) dx, \label{eqn:cS1}
\end{align}
where the real projective space, namely $\mathsf{RP}^3$ corresponds to $\Sph^3$ where the antipodal points are identified, and it is diffeomorphic to $\SO$ via \refeqn{Qx}, i.e., $Q(\mathsf{RP}^3)=\SO$. The scalar $\mathcal{J}(x)\in\Re$ is composed of two factors. The first one is to convert the three dimensional infinitesimal volume $dx$ on $\Sph^3$ to the three dimensional volume $Q(dx)$ on $\SO$, and the second factor accounts that $dx$ and $dR$ are normalized by the volume of $\Sph^3$ and $\SO$ respectively. 

In \cite{ShuJAS93}, the perturbation of $Q(x)$ is given by
\begin{align*}
(R^T \delta R)^\vee & = 2(q_4 \delta q - q \delta q_4 - q\times \delta q) = J(x) \delta x,
\end{align*}
where the matrix $J\in\Re^{3\times 4}$ is 
\begin{align*}
J(x)=2\begin{bmatrix} q_4 I-\hat q & -q \end{bmatrix}.
\end{align*}
Therefore, the scaling factor is 
\begin{align*}
\mathcal{J}(x) = \frac{2\pi^2}{8\pi^2}\sqrt{\mathrm{det}[J(x)J(x)^T]} = 
\frac{2\pi^2}{8\pi^2}\sqrt{\mathrm{det}[4I_{3\times 3}}]=2,
\end{align*}
where $2\pi^2$ and $8\pi^2$ correspond to the volume of $\Sph^3$ and $\SO$, respectively. Furthermore, the three-sphere can be considered as $\Sph^3 = \{x,-x\,|\, x\in\mathsf{RP}^3\}$, and $x^T B x$ is an even function of $B$. Applying these to \refeqn{cS1},
\begin{align}
c(S) &= \int_{\mathsf{RP}^3} 2\exp(x^T B x)\, dx 
=\int_{\Sph^3} \exp(x^T B x)\, dx.\label{eqn:cS2}
\end{align}

The above expression is equivalent to the normalizing constant of the Bingham distribution on $\Sph^3$~\cite{MarJup99}. The probability density of the Bingham distribution is given by
\begin{align*}
p_{\mathrm{Bing}}(x)=\frac{1}{b(B)} \exp (x^T B x),
\end{align*}
with respect to the uniform distribution, where $b(B)\in\Re$ is a normalizing constant, defined such that $b(B)=\int_{\Sph^3 }\exp (x^T B x) dx$. 
It has been shown that the normalizing constant $b(B)$ is given by the hypergeometric function of matrix argument, $b(B) = {}_1 F_1^{(2)} (\frac{1}{2},2;B)$~\cite{MarJup99}, which is shown to be evaluated as
\begin{align}
b(B) 
& = \int_{-1}^1 \frac{1}{2}I_0\!\bracket{\frac{1}{4}(b_2-b_1)(1-u)} I_0\!\bracket{\frac{1}{4}(b_4-b_3)(1+u)}\nonumber\\
&\times \exp\braces{-\frac{1}{2}(b_1+b_2)u}\,du,\label{eqn:b}
\end{align}
in~\cite{KunSchMG04,WooAJS93}, where $b_i$ denotes the $i$-th diagonal element of $B$, and $I_0$ denotes the modified Bessel function of the first kind. In short, the normalizing constant for the matrix Fisher distribution on $\SO$, corresponds to the normalizing constant for the Bingham distribution on $\Sph^3$, when the matrix $B$ is defined by \refeqn{B}, i.e., $c(F) = c(S) = b(B)$. The certain equivalence between the matrix Fisher distribution and the Bingham distribution was identified in~\cite{PreJRSSS86}, and an expression for $c(S)$ is presented in~\cite{WooAJS93} based on the relation. However, the reference did not consider the scaling factor $\mathcal{J}(x)$ properly.

Substituting \refeqn{B} into \refeqn{b}, we obtain \refeqn{cS} for the case when $(i,j,k)=(1,2,3)$. Any circular shift for the diagonal elements of $S$ can be written as $(C^T)^m S C^m$ for a positive integer $m$, where $C\in\SO$ is defined as $C=[e_3,e_1,e_2]$. 
According to the same argument to obtain \refeqn{cS0}, we have $c(S)= c((C^T)^m S C^m)$, i.e., the normalizing constant is invariant under any circular shifts of $s_i$. This shows \refeqn{cS}.

\subsection{Proof of Proposition 2}\label{sec:MD}

To derive the marginal distribution, we first consider the matrix Fisher distribution on $\mathsf{SO(2)}=\{R\in\Re^{2\times 2}\,|\, R^T R=I_{2\times 2},\,\mathrm{det}[R]=1\}$, given by
\begin{align*}
p_2(R) = \frac{1}{c_2(F)} \exp(\trs{F^T R}). 
\end{align*}
with respect to the uniform distribution on $\mathsf{SO(2)}$. Let the singular value decomposition of $F$ be given by $F=USV^T$ for $U,V\in\mathsf{SO(2)}$ and $S=\mathrm{diag}[s_1,s_2]$ for $s_1,s_2>0$. Similar to the proof of Proposition 1, the normalizing constant depends only on the singular values, i.e., $c_2(F)=c_2(S)$, and it is given by
\begin{align}
c_2(S) = \int_{\mathsf{SO(2)}} \exp(\trs{S^T R})dR. \label{eqn:c2S0}
\end{align}
We parameterize $\mathsf{SO}(2)$ via $\theta\in[0,2\pi)$ as 
\begin{align*}
R = \begin{bmatrix} \cos\theta & -\sin\theta \\ \sin\theta & \cos\theta \end{bmatrix}.
\end{align*}
Since $\trs{S^TR}=\trs{S}\cos\theta$, and $(R^T\delta R)^\vee = \delta \theta$,
\begin{align}
c_2(S) = \frac{1}{2\pi}\int_{0}^{2\pi}\exp (\trs{S}\cos\theta)d\theta= I_0(\trs{S}),\label{eqn:c2S}
\end{align}
where the factor $\frac{1}{2\pi}$ is included since $dR$ is normalized by the volume $2\pi$ of $\mathsf{SO}(2)$.

Next, we show \refeqn{pri}, \refeqn{c2}. To describe the proof more explicitly, we consider the case when $(i,j,k)=(1,2,3)$. For a given $r_1\in\Sph^2$, choose $r_{1c}\in\Re^{3\times 2}$ such that the columns of $r_{1c}$ span the orthogonal complement of $r_1$, and $[r_1,r_{1c}]\in\SO$. Then, any rotation matrix whose first column is $r_1$ can be written as $[r_1, r_{1c}Z]\in\SO$ for $Z\in\mathsf{SO(2)}$. The transformation from $R$ to $(r_1,Z)$ is shown to preserve the volume~\cite{KhaMarJRSSS77}. As such, the joint probability density for $r_1$ and $Z$ is written as
\begin{align*}
p(r_1,Z)& =\frac{1}{c(S)}\exp(\trs{F^T[r_1,r_{1c}Z]})\\
& = \frac{1}{c(S)} \exp (f_1^T r_1+\tr{ f_{23}^Tr_{1c} Z }).
\end{align*}
Integrating this with respect to $Z$ over $Z\in\mathsf{SO}(2)$, and using \refeqn{c2S0}, we obtain the marginal density for $r_1$ as
\begin{align*}
p(r_1) & = \frac{c_2(f_{23}^T r_{1c})}{c(S)}   \exp (f_1^Tr_1).
\end{align*}
From \refeqn{c2S}, $c_2(f_{23}^T r_{1c})$ depends only on the sum of two singular values of $f_{23}^T r_{1c}\in\Re^{2\times 2}$. Using the fact that $r_{1c}r_{1c}^T=I_{3\times 3}-r_1r_1^T$, we obtain \refeqn{c2} when $(i,j,k)=(1,2,3)$.
%
%
Other cases for $(i,j,k)\in\mathcal{I}$ can be shown similarly. 

\subsection{Proof of Proposition 3}\label{sec:UT}

From \refeqn{KM} and \refeqn{Ri}, the arithmetic mean can be written as
\begin{align}
\bar R & = \frac{1}{7} U\bracket{I_{3\times 3} + \sum_{i=1}^3 \{\exp(\theta_i\hat e_i) + \exp(-\theta_i\hat e_i)\}}V^T.\label{eqn:Rbar0}
\end{align}
Using Rodriguez' formula~\cite{ShuJAS93},
\begin{align*}
\exp(\theta_i\hat e_i) + \exp(-\theta_i\hat e_i) 
& = 2(\cos\theta_i I +(1-\cos\theta_i)e_ie_i^T),
\end{align*}
which is a diagonal matrix where the $i$-th diagonal elements is 2, and the other diagonal elements are $2\cos\theta_i$. Therefore, the expression in the bracket of \refeqn{Rbar0} reduces to the matrix $D$ at \refeqn{UDV}. This shows \refeqn{UDV}.

\bibliography{/Users/tylee/Documents/BibMaster}
\bibliographystyle{IEEEtran}

\end{document}